\documentclass[psamsfonts]{amsart}

\usepackage{amsmath, bbm}
\usepackage{amssymb}
\usepackage{amsthm}
\usepackage[margin=1in]{geometry}
\usepackage{fancyhdr}
\usepackage{todonotes}
\usepackage{setspace}
\usepackage[all,arc]{xy}
\usepackage{enumerate}
\usepackage{mathrsfs}

\usepackage{color}
\usepackage[colorlinks=true,urlcolor=blue,citecolor=red,linkcolor=blue,linktocpage,pdfpagelabels,bookmarksnumbered,bookmarksopen]{hyperref}

\parskip 0.5em

\newtheorem{thm}{Theorem}[section]
\newtheorem{cor}[thm]{Corollary}
\newtheorem{prop}[thm]{Proposition}
\newtheorem{lem}[thm]{Lemma}

\theoremstyle{definition}
\newtheorem{defn}[thm]{Definition}

\theoremstyle{remark}
\newtheorem{rem}[thm]{Remark}

\newtheorem{claim}{Claim}

\DeclareMathOperator{\spt}{spt}

\DeclareMathOperator{\id}{id}

\DeclareMathOperator{\diam}{diam}

\DeclareMathOperator{\dist}{dist}

\DeclareMathOperator{\Vol}{Vol}
\DeclareMathOperator{\Rad}{Rad}


\newcommand{\g}{\gamma}
\newcommand{\Om}{\Omega}


\newcommand{\cK}{\mathcal{K}}
\newcommand{\cL}{\mathcal{L}}
\newcommand{\cP}{\mathcal{P}}


\newcommand{\R}{\mathbf{R}}

\newcommand{\mres}{\mathbin{\vrule height 1.6ex depth 0pt width
		0.13ex\vrule height 0.13ex depth 0pt width 1.3ex}}

\newcommand{\paren}[1]{\left(#1\right)}
\newcommand{\bracket}[1]{\left[#1\right]}

\renewcommand{\d}{\,\textnormal{d}}

\makeatletter
\makeatother
\numberwithin{equation}{section}

\bibliographystyle{plain}

\title{On nonexpansiveness of metric projection operators on Wasserstein spaces}

\author[A. Adve]{Anshul Adve} 
\address{Department of Mathematics, Princeton University, Princeton, NJ 08540, USA}
\email{aadve@princeton.edu}

\author[A.R. M\'esz\'aros]{Alp\'ar R. M\'esz\'aros}  
\date{\today}
\address{Department of Mathematical Sciences, University of Durham, Durham DH1 3LE, United Kingdom}
\email{alpar.r.meszaros@durham.ac.uk}

\date{\today}

\begin{document}
	
	\maketitle
	
	\begin{abstract}
		In this paper we investigate properties of metric projections onto specific closed and geodesically convex proper subsets of Wasserstein spaces $(\cP_p(\R^d),W_p).$ When $d=1$, as $(\cP_2(\R),W_2)$ is isometrically isomorphic to a flat space with a Hilbertian structure, the corresponding projection operators are expected to be nonexpansive. We give a direct proof of this fact, relying on intrinsic analysis, which also implies nonexpansiveness in certain special cases in higher dimensions. When $d>1$, we show the failure of this property in two regimes: when $p>1$ is either small enough or large enough. Finally, we prove some positive curvature properties of Wasserstein spaces $(\cP_p(\R^d),W_p)$ when $d\ge 2$ and $p\in(1,+\infty)$ are arbitrary: we show that Wasserstein spaces are nowhere locally {\it Busemann NPC} spaces, and they nowhere locally satisfy the so-called \emph{projection criterion}. As a corollary of the former, they have nonnegative \emph{upper Alexandrov curvature}, in a precise sense that we define here. In our analysis a particular subset of probability measures having densities uniformly bounded above by a given constant plays a special role.
	\end{abstract}

	
	\section{Introduction}
	
	Fix a closed, convex set $\Omega \subseteq \mathbf{R}^d$. For $p \ge 1$, let $\mathcal{P}_p(\Omega)$ denote the set of nonnegative Borel probability measures $\mu$ supported in $\Omega$ with finite $p^{th}$ moment $\int_\Omega |x|^p \d\mu < \infty$. Equip this space with the $p$-Wasserstein distance $W_p$, i.e.
	$$
	W_p^p(\mu,\nu):=\inf\left\{\int_{\Om\times\Om}|x-y|^p\d\g(x,y):\ \g\in\Pi(\mu,\nu) \right\},
	$$
	where $\Pi(\mu,\nu):=\left\{\g\in\cP_p(\Om\times\Om):\ (\pi^x)_\sharp\g=\mu,\ (\pi^y)_\sharp\g=\nu\right\}$ denotes the set of transportation plans between $\mu$ and $\nu$ and $\pi^x,\pi^y:\Om\times\Om\to\Om$ stand for the canonical projections $\pi^x(a,b)=a$, $\pi^y(a,b)=b$. We denote by $\Pi_o(\mu,\nu)\subseteq\Pi(\mu,\nu)$ the set of optimal plans that realize the value $W_p(\mu,\nu)$. It is well-known (see for instance \cite{AmbGigSav}) that $(\cP_p(\Om),W_p)$ defines a geodesic metric space. Moreover, if $\Om$ is compact then $W_p$ metrizes the weak-$\star$ convergence of probability measures in $\cP_p(\Om).$ For convenience, we sometimes use the notation $W_p(\Omega):=(\cP_p(\Om),W_p)$.
	
	In this paper, we are interested in properties of projection operators ${\rm{P}}_{\cK}^p:\cP_p(\Om)\to \cK$, where $\cK\subseteq\cP_p(\Om)$ is a given closed and geodesically convex proper subset of $\cP_p(\Om)$. In particular, the main question we are interested in is the so-called {\it nonexpansiveness property} that reads as
	\begin{align}\label{question:nonexpansive?}
		{\rm Is\ it\ true\ that\ \ }W_p\left({\rm{P}}_{\cK}^p[\mu], {\rm{P}}_\cK^p[\nu]\right)\le W_p(\mu,\nu),\ \forall\ \mu,\nu\in\cP_p(\Om)\ ?\tag{Q}
	\end{align}
	A closely related question, which we also address, is to what extent such nonexpansiveness properties (or their failure) reveal new features of Wasserstein spaces, from the point of view of their curvature.
	
	For $\mu\in\cP_p(\Om)$, the projection ${\rm{P}}_\cK^p[\mu]$  is defined as the solution of the variational problem
	\begin{equation}\label{def:proj}
		{\rm{P}}_{\cK}^p[\mu]:={\rm{argmin}}\left\{W_p^p(\rho,\mu):\ \rho\in\cK\right\}.
	\end{equation}

	A few comments on the definition of this operator are necessary. The existence of a solution in this minimization problem is an easy consequence of the direct method of calculus of variations. Indeed, for $\mu\in\cP_p(\Om)$ and $C>0$, the set $\{\rho\in\cP_p(\Om): \ W_p(\rho,\mu)\le C\}$ is tight and the objective functional is weakly lower semicontinuous with respect to the narrow convergence of probability measures. However, for ${\rm{P}}_{\cK}^p[\mu]$ to be well-defined, we would need to have the uniqueness of a minimizer in \eqref{def:proj}. This turns out to be a subtle question and it is linked to the strict convexity of $\rho\mapsto W_p^p(\rho,\mu)$ and/or the curvature properties of $(\cP_p(\Om),W_p)$. 
	
	While $\rho\mapsto W_p^p(\rho,\mu)$ is known to be convex with respect to the `flat' convex combination of probability measures, i.e. along $[0,1]\ni t\mapsto (1-t)\rho_0+ t\rho_1$, its strict convexity typically fails, unless additional conditions are imposed on $\mu$ (for instance absolute continuity with respect to $\cL^d\mres\Om$; see \cite[Proposition 7.17-7.19]{San}). From the geometric viewpoint however, when studying properties of projection operators, it is more natural to consider the notion of geodesic convexity (which is also referred to as {\it displacement convexity} in the case of $(\cP_p(\Om),W_p)$; see \cite{McC, AmbGigSav}). This notion is intimately linked to the curvature properties of the space. By \cite[Section 7.3]{AmbGigSav} we know that when $d\ge 2$, $(\cP_2(\Om),W_2)$ is a positively curved space in the sense of Alexandrov, and so the mapping $\rho\mapsto W_2^2(\rho,\mu)$ in general is not geodesically  $\lambda$-convex, for any $\lambda\in\R$.
	Similarly, the failure of the uniqueness of geodesics in general in $(\cP_p(\Om),W_p)$ indicates that all these spaces are non-negatively curved also for $p\neq 2$ (cf. \cite[Corollary 2.3.2]{Jos}). However, to the best of our knowledge, the literature on fine curvature properties of $(\cP_p(\Om),W_p)$ is very sparse at this point.
	
	These considerations let us conclude that the uniqueness of the projection onto closed and geodesically convex sets $\cK$ fails in general. To illustrate this fact, consider the following example. Let $\Om=\R^2$, and let
	\begin{align*}
		\cK:=\left\{\frac12\delta_{(-x,2)}+\frac12\delta_{(x,-2)}:\ x\in[-1,1]\right\}.
	\end{align*}
	Then $\cK$ is a closed geodesically convex set in $(\cP_2(\Om),W_2)$. Let $\mu:=\frac12\delta_{(-1,0)}+\frac12\delta_{(1,0)}$. Clearly, both measures $\rho_0:=\frac12\delta_{(-1,2)}+\frac12\delta_{(1,-2)}$ and $\rho_1:=\frac12\delta_{(1,2)}+\frac12\delta_{(-1,-2)}$ belong to $\cK$ and have the same minimal $W_2$ distance from $\mu$. So the projection of $\mu$ onto $\cK$ cannot be defined in a unique way in this case.
	
	Because of this reason, in part of our study we will focus on some particular geodesically convex closed subsets $\cK\subset\cP_p(\Om)$ onto which we can guarantee the uniqueness of the projected measure in \eqref{def:proj}.
	
	For a given $\lambda>0$, we consider 
	\begin{align}\label{def:Kp}
		\cK^\lambda_p(\Omega) := \left\{\rho \in \mathcal{P}_p(\Omega) \cap L^1(\Omega) \text{ and } 0 \leq \rho \leq \lambda {\rm{\ a.e.}}\right\},
	\end{align}
	that is the subset of absolutely continuous probability measures having densities uniformly bounded above by $\lambda$. We use the notation $\cK_p(\Omega)$ for $\cK^1_p(\Omega)$.
	Some of our main results concern the properties of the projection operator ${\rm{P}}_{\cK_{p}(\Omega)}^p$. To simplify this (when it will be clear from the context), we use the notation 
		\begin{equation}\label{simp:notation}
			{\rm{P}}_{\Omega}^p := {\rm{P}}_{\cK_{p}(\Omega)}^p.
		\end{equation}

	As we show in Lemma \ref{lem:admissibles_convex}, $\cK_p(\Om)$ is a closed geodesically convex subset of $(\cP_p(\Om),W_p)$. More importantly, arguments verbatim to the ones in \cite[Proposition 5.2]{DePMesSanVel} (which give a \emph{saturation} characterization of the projected measure) let us conclude that the projection problem \eqref{def:proj} onto $\cK_p(\Om)$ has a unique solution for any $p>1$ (only the case $p=2$ was considered in \cite{DePMesSanVel}).
	A secondary motivation behind the consideration of these particular subsets is the following: in recent years the set $\cK_2(\Om)$ received some special attention in applications of optimal transport techniques to study the well-posedness and further properties of PDEs arising in crowd motion models under {\it density constraints}. For a non-exhaustive list of references on this subject we refer to \cite{MauRouSan,MesSan, DePMesSanVel,Mes}.  
	
	\medskip
	
	\noindent {\bf A non-exhaustive literature review of the curvature properties of $p$-Wasserstein spaces.} The study of curvature properties of metric spaces has a long and fruitful history. For a very good exposition of the subject we refer to the monographs \cite{BriHae,Jos}. Compared to this, the study of the curvature properties of Wasserstein spaces started only relatively recently. It seems that \cite[Section 4.5]{Ott} gave the first formal arguments for the fact that $W_2(\R^d)$ has nonnegative (sectional) curvature, and that $W_2(\mathbf{R}^d)$ is flat for $d=1$ and non-flat for $d>1$. A similar claim has been made in \cite[Corollary 2]{Lott} (in the case when the ambient space is a smooth Riemannian manifold with nonnegative sectional curvature). Positive curvature of the 2-Wasserstein space in the sense of Alexandrov has been discussed in \cite[Proposition 2.10]{Stu:1} and \cite[Section 7.3]{AmbGigSav} (for metric spaces with positive curvature in the sense of Alexandrov, as ambient spaces) and in \cite[Theorem A.8]{LotVil} (in the case of smooth Riemannian manifolds with nonnegative sectional curvature, as ambient spaces). The vanishing curvature of $W_2(\R)$ has been mentioned also in \cite{Klo}. On the other hand, \cite[Remark 2.10]{BerKlo} explains that non-positive curvature of a metric space $X$ is not generally inherited by $W_2(X)$, because $W_2(X)$ can be non-geodesic.
	It seems to us, however, that a detailed study of the curvature properties of $p$-Wasserstein spaces, for $p\neq 2$, is not available in the literature. Therefore, in this paper we propose some potential steps in this direction.
	
	\medskip \medskip
	
	\noindent{\bf Description of main results.} Rather than investigating general curvature properties of Wasserstein spaces, our main focus in this paper is to obtain some fine properties of metric projection operators. In turn, these properties reveal some features of these spaces from the point of view of their curvature, in a quantitative way.
	
	\medskip
	
	{\it Nonexpansiveness of ${\rm{P}}_\Om^p$ onto  $\cK_p(\Om)$.}
	
	\medskip
	
	First, we investigate the question of nonexpansiveness of the projection operator ${\rm{P}}_\Om^p$ onto the set $\cK_p(\Om)$. When $d=1$, it is well-known that $\cP_2(\R)$ is isometrically isomorphic to a closed convex subset of a Hilbert space (the space of nondecreasing functions belonging to $L^2([0,1];\R)$, see \cite[Section 9.1]{AmbGigSav}). The isomorphism takes a probability measure on $\R$ to the inverse of its cumulative distribution function. Therefore, $(\cP_2(\R),W_2)$ can be regarded as a flat space and so it is expected that ${\rm{P}}_\R^2$ is nonexpansive onto closed geodesically convex subsets $\cK\subset\cP_2(\R)$. Indeed, every closed geodesically convex subset $\cK\subset\cP_2(\R)$ corresponds to a closed convex subset of $L^2([0,1];\R)$. For instance, the space $\cK_2(\R)$ defined in \eqref{def:Kp} corresponds to $\{X\in L^2([0,1];\R):\ X'\ge 1\ {\rm{a.e.}}\}$. Therefore, the projection problem from $(\cP_2(\R),W_2)$ onto $\cK_2(\R)$ can be transferred to a projection problem in a Hilbertian setting, which has the nonexpansive property. Returning to the original setting via the isometric isomorphism, it follows that $\textnormal{P}_{\mathbf{R}}^2$ is nonexpansive. When $p\neq 2$, the nonexpansiveness property of projection operators on $L^p$ spaces is a more subtle question (see for instance \cite{Bru}) and therefore a conclusion similar to the one when $p=2$ seems to be nontrivial. In the case when $d>1$, even for $p=2$, it is not possible to identify $(\cP_2(\R^d),W_2)$ with a subset of a Hilbert space (and in particular, as discussed before, this space will not be flat). Therefore in those cases, the question of nonexpansiveness seems to be highly nontrivial.
	
	When $p=2$, Theorem \ref{thm:almost_lipschitz} presents a sort of {\it weak nonexpansiveness property} of the projection in arbitrary dimensions. Here we show that the left hand side of the inequality in \eqref{question:nonexpansive?} is always bounded above by the transportation cost of a certain suboptimal plan between the original measures. This suboptimal plan becomes optimal in two extreme scenarios: either when $d=1$ or when one of the original measures $\mu,\nu$ is a Dirac mass. So, this yields the nonexpansiveness of the projection operator when $d=1$ (see Corollary \ref{cor:1d}) or when one of the measures is a Dirac mass (see Corollary \ref{cor:dirac_nonexpansive}). 
	
	By \cite[Theorem 2.3]{KriRep} and \cite[Proposition 2.4 of Chapter II.2]{BriHae}, we know that both smooth Riemannian manifolds with non-positive sectional curvature and Alexandrov spaces with non-positive curvature satisfy the projection nonexpansiveness property. To the best of our knowledge, it is unclear whether the non-positive curvature condition of these spaces in general (beyond Riemannian manifolds) is also a necessary condition to ensure the nonexpansiveness of the projection operator in general. 
	
	When $d\ge 2$, as previously discussed, $(\cP_p(\Om),W_p)$ possesses some non-negative curvature properties (even for $p\neq 2$). Therefore, there is good reason to anticipate that there exist closed geodesically convex subsets $\mathcal{K} \subseteq \cP_p(\Om)$ onto which the projection operator ${\rm{P}}_{\mathcal{K}}^{p}$ fails to be nonexpansive (whenever it is well-defined). Indeed, we show in the second half of Section \ref{sec:results} that in the case $\mathcal{K} = \mathcal{K}_p(\Omega)$, nonexpansiveness fails in two regimes: either when $p>1$ is small enough (Proposition \ref{prop:small_p_counterexample}) or when it is large enough (Proposition \ref{prop:p_large}).

	More specifically, Proposition \ref{prop:small_p_counterexample} shows that there exists $p(d)>1$ small such that for any $p\in (1,p(d))$, the projection operator ${\rm{P}}_{\R^d}^p$ onto $\cK_p(\R^d)$ fails to be nonexpansive. In our construction, we provide a quantitative asymptotic description of $p(d)$ as a function of $d$, for $d$ large. Proposition \ref{prop:p_large} shows that there exists a closed convex set $\Omega \subseteq \mathbf{R}^d$ and a universal constant $C \geq 1$ (independent of the dimension) such that ${\rm P}_{\Omega}^p$ fails to be nonexpansive for all $p \geq Cd$. 
	
	The proofs of both of these propositions are constructive, i.e. we construct particular counterexamples to the nonexpansiveness property. Interestingly, relying again on Corollary \ref{cor:dirac_nonexpansive}, neither of the previous constructions provide a counterexample for $p=2$. Heuristically, our results would provide an argument  (in combination with \cite[Proposition 2.4 of Chapter II.2]{BriHae}) for the fact that $(\cP_p(\Om),W_p)$ is positively curved, when $p>1$ is specified in the two regimes given by these propositions.
	
	\medskip
	
	{\it Some nonnegative curvature properties of Wasserstein spaces.}
	
	\medskip
	
	Based on the results in hand regarding the nonexpansiveness properties of the projection operators, we go on and study further curvature properties of Wasserstein spaces $(\cP_p(\R^d),W_p)$ in a non-heuristic sense, without restrictions on $p$. In metric geometry, it is well-known that the so-called {\it Busemann non-positively curved (NPC)} spaces admit a natural characterization via a projection nonexpansivity property. In Section \ref{sec:5} we show that Wasserstein spaces fail to be Busemann NPC spaces. It is also well-known that every Alexandrov NPC space is a Busemann NPC space. Therefore, Wasserstein spaces also fail to be Alexandrov NPC spaces.
	Even though the failure of the NPC properties both in the Busemann and Alexandrov sense can be seen as  a consequence of non-uniqueness of some geodesics, we demonstrate this failure in a robust sense even in situations where geodesics are unique.

Our main results from Section \ref{sec:5} are summarized in Theorem \ref{thm:Wp_not_busemann} and Theorem \ref{thm:Wp_not_nonexpansive}. Here, we show in particular that if $d \geq 2$ and $1 \leq p \leq \infty$, no open subspace of $W_p(\mathbf{R}^d)$ is a Busemann NPC space. This implies that $W_p(\mathbf{R}^d)$ has positive upper curvature in the sense of Definition \ref{defn:upper_curvature}.

\medskip

For simplicity of presentation of our main ideas, in this manuscript we consider only the case when the ambient space is a (subset of a) flat Euclidean space $\R^d$. We expect our results to remain true in the case of smooth Riemannian manifolds that satisfy suitable nonnegative curvature bounds. However, we leave these questions to future study.

\medskip

The structure of the rest of the paper is as follows. In Section \ref{sec:3} we study some geometric properties of the projection operator ${\rm{P}}_\Om^2$. These properties seem to be interesting in their own right: we show that ${\rm{P}}_{\R^d}^2$ preserves barycenters of measures (see Proposition \ref{prop:barycenters}), and it satisfies a certain translation invariance with respect to distances between measures (see Proposition \ref{prop:translation_invariance}).  Section \ref{sec:results} contains the proofs of our first main results: in Theorem \ref{thm:almost_lipschitz} we show the `weak nonexpansiveness' property of ${\rm{P}}_\Om^2$, and deduce the full nonexpansiveness in the two cases mentioned above (see Corollaries \ref{cor:1d} and \ref{cor:dirac_nonexpansive}). Furthermore, Proposition \ref{prop:small_p_counterexample} constructs the counterexample to the nonexpansiveness of ${\rm{P}}_{\R^d}^p$ onto $\cK_p(\R^d)$ when $d\ge 2$ and $p\in (1,p(d))$, and studies the asymptotic behavior of $p(d)$ as the dimension becomes large. Here we give also the proof of Proposition \ref{prop:p_large}, i.e. the failure of nonexpansiveness when $p$ is large enough. Finally, in Section \ref{sec:5}, we study the nonnegative curvature properties of Wasserstein spaces from the point of view of Busemann NPC spaces, for general $p\in (1,+\infty)$. This section also contains the proofs of our last two main results, Theorems \ref{thm:Wp_not_busemann} and \ref{thm:Wp_not_nonexpansive}. We end the paper with Appendix \ref{sec:prelim}, where we recall some results from optimal transport used in the main text.

\section{Some geometric properties of ${\rm{P}}_\Om^2$}\label{sec:3}

\subsection{Barycenters and translation invariance of ${\rm{P}}_\Om^2$}

Recall the definition of $\cK_{p}(\Omega)$ from \eqref{def:Kp} and the simplified notation ${\rm P}_{\Omega}^{p}$ from \eqref{simp:notation}. Suppose for now that $\Omega = \mathbf{R}^d$, so we do not have to worry about boundaries. Then there are a few symmetries which one can exploit in Question \ref{question:nonexpansive?}. First, the projection operator ${\rm{P}}_{\Omega}^p$ commutes with translations. When $p=2$, the projection also preserves barycenters:

\begin{prop}\label{prop:barycenters}
	Let $\mu \in \mathcal{P}_2( \mathbf{R}^d)$ and $\rho:= {\rm{P}}_{ \mathbf{R}^d}^2[\mu]$. Then
	\begin{align*}
		\int_ {\mathbf{R}^d} x \d\rho = \int_{ \mathbf{R}^d} x \d\mu.
	\end{align*}
\end{prop}

\begin{proof}
	First, let us note that by Lemma \ref{lem:P_defined}, the projection $\rm{P}_{\mathbf{R}^d}^2$ is well-defined.
	For $h \in \mathbf{R}^d$, let $\tau \colon x \mapsto x+h$ denote the translation map by $h$. Then $\tau_{\sharp}\rho \in \cK_2( \mathbf{R}^d)$. Let $\gamma$ be an optimal plan between $\mu$ and $\rho$. Then by Lemma \ref{lem:plan_translate} $(\id,\tau)_{\sharp}\gamma$ is optimal for $W_2(\mu,\tau_{\sharp}\rho)$. Thus, by the optimality of both $\rho$ and $\gamma$,
	\begin{align*}
		W^2_2(\mu,\rho)&=\int_{ \mathbf{R}^d\times \mathbf{R}^d} |x-y|^2 \d\gamma\\
		& \leq W_2^2(\mu,\tau_{\sharp}\rho)= \int_{ \mathbf{R}^d\times\mathbf{R}^d} |x-y|^2 \d[(\id,\tau)_{\sharp}\gamma]\\
		& = \int_{ \mathbf{R}^d\times \mathbf{R}^d} |x-y-h|^2 \d\gamma = \int_{ \mathbf{R}^d\times \mathbf{R}^d}|x-y|^2\d\gamma - 2h \cdot \int_{ \mathbf{R}^d\times  \mathbf{R}^d} (x-y)\d\gamma + |h|^2.
	\end{align*}
	We conclude that $\displaystyle\int_{ \mathbf{R}^d\times \mathbf{R}^d}(x-y)\d\gamma = 0$. Indeed, otherwise one could set $\displaystyle h:=\lambda\int_{ \mathbf{R}^d\times \mathbf{R}^d}(x-y)\d\gamma$ for $\lambda>0$, and would obtain
		\begin{align*}
			W^2_2(\mu,\rho)&\le W^2_2(\mu,\rho) - 2\lambda \left|\int_{ \mathbf{R}^d\times  \mathbf{R}^d} (x-y)\d\gamma\right|^{2} + \lambda^{2} \left|\int_{ \mathbf{R}^d\times  \mathbf{R}^d} (x-y)\d\gamma\right|^{2}\\
			&= W^2_2(\mu,\rho) + \left(\lambda^{2}-2\lambda\right) \left|\int_{ \mathbf{R}^d\times  \mathbf{R}^d} (x-y)\d\gamma\right|^{2}.
		\end{align*}
		So, by choosing $\lambda\in (0,2)$, the previous inequality would yield a contradiction. The result follows, since
		\begin{align*}
			0 &= \displaystyle\int_{ \mathbf{R}^d\times \mathbf{R}^d}(x-y)\d\gamma = \int_{ \mathbf{R}^d} x \d\mu - \int_{ \mathbf{R}^d} x \d\rho.
			\qedhere
		\end{align*}
\end{proof}

\begin{lem}\label{lem:plan_translate}
	Let $\mu,\nu\in\mathcal{P}_2(\mathbf{R}^d)$ and let $\nu'\in\mathcal{P}_2(\mathbf{R}^d)$ a translation of $\nu$, i.e. $\nu'=\tau_\sharp\nu$, where $\tau:x\mapsto x+h$ (for some given $h\in\mathbf{R}^d$). If $\gamma\in\mathcal{P}_2(\mathbf{R}^d\times\mathbf{R}^d)$ is optimal for $W_2^2(\mu,\nu)$, then $(\id,\tau)_\sharp\gamma$ is optimal for $W_2(\mu,\nu').$ 
\end{lem}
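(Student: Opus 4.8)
The plan is to show that push-forward by $(\id,\tau)$ is a bijection between $\Pi(\mu,\nu)$ and $\Pi(\mu,\nu')$ which shifts the transport cost by a constant independent of the plan; such a bijection necessarily carries minimizers to minimizers. First I would verify that $(\id,\tau)_\sharp\gamma \in \Pi(\mu,\nu')$: since the first component of $(\id,\tau)$ is the identity, the first marginal is unchanged and equals $\mu$, while the second marginal is $\tau_\sharp\nu = \nu'$. The assignment $\gamma \mapsto (\id,\tau)_\sharp\gamma$ is a bijection from $\Pi(\mu,\nu)$ onto $\Pi(\mu,\nu')$, with inverse $\sigma \mapsto (\id,\tau^{-1})_\sharp\sigma$ given by the inverse translation $\tau^{-1}\colon y\mapsto y-h$.

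Next I would compute the transport cost of the pushed-forward plan. By the change of variables formula,
\[
	\int |x-y|^2 \d[(\id,\tau)_\sharp\gamma] = \int |x-y-h|^2 \d\gamma = \int |x-y|^2\d\gamma - 2h\cdot\int (x-y)\d\gamma + |h|^2.
\]
The crucial observation is that $\int (x-y)\d\gamma = \int x\d\mu - \int y\d\nu$ depends only on the marginals $\mu,\nu$, and not on the particular plan $\gamma\in\Pi(\mu,\nu)$. Consequently, under the bijection above, the cost functional on $\Pi(\mu,\nu')$ agrees with the cost functional on $\Pi(\mu,\nu)$ up to the additive constant $-2h\cdot\left(\int x\d\mu - \int y\d\nu\right) + |h|^2$, which is the same for every plan.

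Since adding a plan-independent constant does not alter the set of minimizers, the map $(\id,\tau)_\sharp$ sends optimal plans for $W_2^2(\mu,\nu)$ to optimal plans for $W_2^2(\mu,\nu')$. In particular, if $\gamma$ is optimal for $W_2^2(\mu,\nu)$, then $(\id,\tau)_\sharp\gamma$ is optimal for $W_2(\mu,\nu')$, which is the claim. There is no serious obstacle here; the only point requiring attention is the verification that the linear term $\int(x-y)\d\gamma$ splits across the marginals and is therefore constant on $\Pi(\mu,\nu)$, which is precisely what makes the cost shift uniform. Alternatively, one could argue via $c$-cyclical monotonicity of $\spt\gamma$: translating the second coordinate by $h$ changes each pairwise cost comparison only by a term proportional to $\sum_i (y_{\sigma(i)} - y_i)$, which vanishes for every permutation $\sigma$, so cyclical monotonicity, and hence optimality, is preserved.
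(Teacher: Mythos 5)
Your proof is correct, but your main argument takes a genuinely different route from the paper's. The paper proves the lemma via cyclical monotonicity: it observes that finitely many points of $\spt((\id,\tau)_\sharp\gamma)$ have the form $(x_i,y_i+h)$ with $(x_i,y_i)\in\spt(\gamma)$, expands $\sum_i|x_i-y_i-h|^2$ and $\sum_i|x_i-y_{\sigma(i)}-h|^2$, notes that the terms linear in $h$ agree on both sides (since $\sigma$ is a permutation), so cyclical monotonicity of $\spt\gamma$ transfers to $\spt((\id,\tau)_\sharp\gamma)$, and then invokes the equivalence between optimality and cyclically monotone support (\cite[Theorem 5.10]{Vil}). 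This is exactly the ``alternative'' you sketch in your final sentence, so you had the paper's proof in hand as a fallback. Your primary argument --- that $\gamma\mapsto(\id,\tau)_\sharp\gamma$ is a bijection of $\Pi(\mu,\nu)$ onto $\Pi(\mu,\nu')$ shifting every plan's cost by the same constant $-2h\cdot\left(\int x\d\mu-\int y\d\nu\right)+|h|^2$, because $\int(x-y)\d\gamma$ splits across the marginals --- is more elementary and self-contained: it avoids the nontrivial characterization of optimality by support geometry, and it yields slightly more, namely the exact identity $W_2^2(\mu,\nu')=W_2^2(\mu,\nu)-2h\cdot\left(\int x\d\mu-\int y\d\nu\right)+|h|^2$ and a bijection between the full sets of optimal plans rather than just optimality of the image of one plan. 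Both proofs ultimately rest on the same algebraic fact (the $h$-linear term is plan-independent); the paper localizes it to finite subsets of the support, while you integrate it globally against $\gamma$, which is legitimate since $\mu,\nu\in\mathcal{P}_2(\mathbf{R}^d)$ have finite first moments.
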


\begin{proof}
	It is immediate to check that $\tilde\gamma:=(\id,\tau)_\sharp\gamma$ is an admissible plan for $W_2(\mu,\nu').$
	
	By \cite[Theorem 5.10]{Vil} (see also \cite[Section 1.6.2]{San}) it is enough to show that $\tilde\gamma$ has cyclic monotone support. Let $n\in\mathbb{N}$. We notice that a collection of $n$ points from $\spt(\tilde\gamma)$ has the form $(x_i,y_i+h)_{i=1}^n$, where $(x_i,y_i)\in\spt(\gamma)$, $i\in\{1,\dots,n\}$. Let $\sigma:\{1,\dots,n\}\to\{1,\dots,n\}$ be a permutation of $n$ letters. Then we have
	\begin{align*}
		\sum_{i=1}^n |x_i-y_i-h|^2 & = \sum_{i=1}^n |x_i-y_i|^2-2\sum_{i=1}^n(x_i-y_i)\cdot h + n|h|^2\\
		&\le \sum_{i=1}^n |x_i-y_{\sigma(i)}|^2-2\sum_{i=1}^n(x_i-y_i)\cdot h + n|h|^2 = \sum_{i=1}^n |x_i-y_{\sigma(i)}-h|^2,
	\end{align*}
	where in the inequality we have used the cyclic monotonicity of $\spt(\gamma)$. The result follows.
\end{proof}

From these observations one obtains the following ``translation invariance" when $p = 2$ and the ambient space is $ \mathbf{R}^d$.

\begin{prop}\label{prop:translation_invariance}
	Let $\mu,\nu \in \mathcal{P}_2(\mathbf{R}^d)$ and $\nu'$ a translate of $\nu$. Then
	\begin{align*}
		W_2^2(\mu,\nu) - W_2^2(\textnormal{P}_{ \mathbf{R}^d}^2[\mu],\textnormal{P}_{ \mathbf{R}^d}^2[\nu]) = W_2^2(\mu,\nu') - W_2^2(\textnormal{P}_{ \mathbf{R}^d}^2[\mu],\textnormal{P}_{ \mathbf{R}^d}^2[\nu']).
	\end{align*}
\end{prop}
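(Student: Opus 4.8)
The plan is to expand both $W_2^2(\mu,\nu')$ and $W_2^2(\textnormal{P}_{\mathbf{R}^d}^2[\mu],\textnormal{P}_{\mathbf{R}^d}^2[\nu'])$ as functions of the translation vector and to observe that each differs from its untranslated counterpart by \emph{exactly the same} correction term, so that the two corrections cancel upon subtraction. Write $\nu' = \tau_\sharp\nu$ with $\tau\colon x\mapsto x+h$, and let $b(\sigma):=\int_{\mathbf{R}^d} x\d\sigma$ denote the barycenter of a measure $\sigma$.

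First I would expand the unprojected pair. Taking $\gamma\in\Pi_o(\mu,\nu)$, Lemma \ref{lem:plan_translate} gives that $(\id,\tau)_\sharp\gamma$ is optimal for $W_2(\mu,\nu')$, and since $\gamma$ has marginals $\mu,\nu$ one has $\int(x-y)\d\gamma = b(\mu)-b(\nu)$. Expanding the square yields
\begin{align*}
W_2^2(\mu,\nu') = \int_{\mathbf{R}^d\times\mathbf{R}^d}|x-y-h|^2\d\gamma = W_2^2(\mu,\nu) - 2h\cdot\big(b(\mu)-b(\nu)\big) + |h|^2.
\end{align*}

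Next I would treat the projected pair. Since $\textnormal{P}_{\mathbf{R}^d}^2$ commutes with translations, $\textnormal{P}_{\mathbf{R}^d}^2[\nu'] = \tau_\sharp\textnormal{P}_{\mathbf{R}^d}^2[\nu]$, so the identical argument, now applied to the pair $(\textnormal{P}_{\mathbf{R}^d}^2[\mu],\textnormal{P}_{\mathbf{R}^d}^2[\nu])$ via Lemma \ref{lem:plan_translate}, gives
\begin{align*}
W_2^2(\textnormal{P}_{\mathbf{R}^d}^2[\mu],\textnormal{P}_{\mathbf{R}^d}^2[\nu']) = W_2^2(\textnormal{P}_{\mathbf{R}^d}^2[\mu],\textnormal{P}_{\mathbf{R}^d}^2[\nu]) - 2h\cdot\big(b(\textnormal{P}_{\mathbf{R}^d}^2[\mu])-b(\textnormal{P}_{\mathbf{R}^d}^2[\nu])\big) + |h|^2.
\end{align*}

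The crux is that the two linear-in-$h$ correction terms coincide. By Proposition \ref{prop:barycenters} the projection preserves barycenters, so $b(\textnormal{P}_{\mathbf{R}^d}^2[\mu]) = b(\mu)$ and $b(\textnormal{P}_{\mathbf{R}^d}^2[\nu]) = b(\nu)$; hence both correction terms equal $-2h\cdot(b(\mu)-b(\nu)) + |h|^2$. Subtracting the two displayed identities, the linear and quadratic $h$-terms cancel identically and the claimed equality follows. I expect no genuine obstacle here beyond correctly assembling the three ingredients (translation-invariance of $\textnormal{P}_{\mathbf{R}^d}^2$, Lemma \ref{lem:plan_translate}, and Proposition \ref{prop:barycenters}); the only point deserving care is the reuse of the optimal-plan argument for the projected measures, which is legitimate precisely because projection intertwines with the translation $\tau$.
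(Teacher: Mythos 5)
Your proof is correct and follows essentially the same route as the paper's: both expand $|x-y-h|^2$ for the translated optimal plans supplied by Lemma \ref{lem:plan_translate} (using that $\textnormal{P}_{\mathbf{R}^d}^2$ commutes with translations), and both cancel the linear-in-$h$ terms via the barycenter preservation of Proposition \ref{prop:barycenters}. The only difference is organizational --- you record two separate identities and subtract, while the paper computes the difference directly --- which is immaterial.
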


\begin{proof}
	Denote $\rho := \mathcal{P}_{ \mathbf{R}^d}^2[\mu]$, $\sigma := \mathcal{P}_{ \mathbf{R}^d}^2[\nu]$, and $\sigma' := \mathcal{P}_{ \mathbf{R}^d}^2[\nu']$. Let $\gamma\in\Pi_o(\mu,\nu)$ and $\eta\in\Pi_o(\rho,\sigma)$. If $\tau \colon x \mapsto x+h$ is the translation map which pushes forward $\nu$ onto $\nu'$, then we can construct optimal plans $\gamma',\eta'$ from $\mu,\rho$ to $\nu',\sigma'$, respectively, by $\gamma' = (\id,\tau)_{\sharp}\gamma$ and $\eta' = (\id,\tau)_{\sharp}\eta$ (see Lemma \ref{lem:plan_translate}; here we have also used the fact that the projection of the translate of a measure is the translate of the projection). Thus
	\begin{align*}
		W_2^2(\mu,\nu') - W_2^2(\rho,\sigma')
		&= \int_{ \mathbf{R}^d\times \mathbf{R}^d} |x-y-h|^2 \d\gamma - \int_{ \mathbf{R}^d\times \mathbf{R}^d} |x-y-h|^2 \d\eta
		\\&= \int_{ \mathbf{R}^d\times  \mathbf{R}^d} |x-y|^2 \d\gamma - \int_{ \mathbf{R}^d\times \mathbf{R}^d} |x-y|^2 \d\eta\\
		& - 2h \cdot \int_{ \mathbf{R}^d\times \mathbf{R}^d} (x-y) \d\gamma + 2h \cdot \int_{ \mathbf{R}^d\times \mathbf{R}^d} (x-y) \d\eta
		\\&= W_2^2(\mu,\nu) - W_2^2(\rho,\sigma) + 2h \cdot \paren{\int_{ \mathbf{R}^d} x \d\rho - \int_{ \mathbf{R}^d} x \d\mu} + 2h \cdot \paren{\int_{ \mathbf{R}^d} y \d\nu - \int_{ \mathbf{R}^d} y \d\sigma},
	\end{align*}
	and the last two terms vanish by Proposition \ref{prop:barycenters}.
\end{proof}

In particular, any counterexample $\mu,\nu$ to nonexpansiveness must remain a counterexample when $\mu,\nu$ are replaced by translates of themselves. This already eliminates several candidates $\mu,\nu$ that may seem like potential counterexamples at first sight.

\section{Nonexpansiveness vs. failure of nonexpansiveness for ${\rm P}_{\Omega}^p$}\label{sec:results}

Recall the definition of $\cK_{p}(\Omega)$ from \eqref{def:Kp} and the simplified notation ${\rm P}_{\Omega}^{p}$ from \eqref{simp:notation}.
Throughout this section, let $\mu,\nu \in \mathcal{P}_p(\Omega)$, and set $\tilde\mu := {\rm{P}}_{\Omega}^p[\mu]$ and $\tilde\nu := {\rm{P}}_{\Omega}^p[\nu]$. Denote the optimal transport plan from $\tilde\mu$ to $\tilde\nu$ by $\eta$. Note that since $\tilde\mu,\tilde\nu$ are absolutely continuous (see Lemma \ref{lem:P_defined}), $\eta$ is induced by a map. 

\subsection{Weak nonexpansiveness of the projection when $p = 2$.}

In the theorem below one bounds the distance squared between $\mu$ and $\nu$ by the transportation cost of a slightly suboptimal transport plan. This is a sort of ``weak nonexpansiveness.''

\begin{thm}\label{thm:almost_lipschitz}
	Let $\Omega\subseteq \mathbf{R}^d$ be a closed convex set. Let $T,U:\Omega\to\Omega$ stand for the  optimal maps from $\tilde\mu,\tilde\nu$ to $\mu,\nu$, respectively. Take $p = 2$ and $\gamma := (T,U)_{\sharp}\eta\in\Pi(\mu,\nu)$. Then
	\begin{align*}
		W_2^2(\tilde\mu,\tilde\nu) \leq \int_{\Omega\times\Omega} |x-y|^2 \d\gamma(x,y).
	\end{align*}
\end{thm}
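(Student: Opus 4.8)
The plan is to reduce the claim to an obtuse-angle variational inequality for each of the two projections, in analogy with the characterization of metric projections onto convex sets in Hilbert space, and then to expand the squared distance after unfolding $\gamma$ through $\eta$.

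First I would fix notation for the maps relating $\rho$ and $\sigma$. Since $\rho$ and $\sigma$ are absolutely continuous, the optimal plan $\eta$ is induced by a map, so I may write $\eta=(\id,S)_\sharp\rho=(S',\id)_\sharp\sigma$, where $S$ is the optimal map from $\rho$ to $\sigma$ and $S'$ is its ($\sigma$-a.e.\ defined) inverse, i.e.\ the optimal map from $\sigma$ to $\rho$. With this notation, the right-hand side to be estimated becomes $\int_{\Om\times\Om}|x-y|^2\d\gamma=\int_{\Om\times\Om}|T(a)-U(b)|^2\d\eta(a,b)$, while $W_2^2(\rho,\sigma)=\int_{\Om\times\Om}|a-b|^2\d\eta(a,b)$.

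The core step is to extract an optimality condition from each projection. For $\rho={\rm{P}}_\Om^2[\mu]$ I would consider the constant-speed geodesic $\rho_t:=\big((1-t)\id+tS\big)_\sharp\rho$ running from $\rho$ to $\sigma$; by the geodesic convexity of $\cK_2(\Om)$ (Lemma \ref{lem:admissibles_convex}) it stays inside $\cK_2(\Om)$, so minimality of $\rho$ forces $W_2^2(\rho,\mu)\le W_2^2(\rho_t,\mu)$. Coupling $\rho_t$ to $\mu$ through the suboptimal plan $\big((1-t)\id+tS,\,T\big)_\sharp\rho$ dominates $W_2^2(\rho_t,\mu)$ by the explicit quadratic polynomial $h(t):=\int_\Om|(1-t)x+tS(x)-T(x)|^2\d\rho(x)$. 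Since $h(0)=W_2^2(\rho,\mu)\le h(t)$ for all $t\in[0,1]$ and $h$ is a parabola, we must have $h'(0)\ge 0$, which reads
\begin{align*}
	\int_\Om (x-T(x))\cdot(S(x)-x)\d\rho(x)\ge 0,
\end{align*}
or equivalently $\int_{\Om\times\Om}(a-T(a))\cdot(b-a)\d\eta\ge 0$. Running the symmetric argument for $\sigma={\rm{P}}_\Om^2[\nu]$ along the reverse geodesic yields $\int_{\Om\times\Om}(b-U(b))\cdot(a-b)\d\eta\ge 0$.

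Finally I would expand the integrand. Writing $r:=a-T(a)$ and $s:=b-U(b)$ one has the pointwise identity $|T(a)-U(b)|^2-|a-b|^2=|r-s|^2-2(a-b)\cdot(r-s)$. Integrating against $\eta$, the term $\int|r-s|^2\d\eta$ is nonnegative, and the cross term splits precisely into (twice) the two quantities just shown to be nonnegative; hence $\int|T(a)-U(b)|^2\d\eta\ge\int|a-b|^2\d\eta=W_2^2(\rho,\sigma)$, which is the assertion. I expect the main obstacle to be the rigorous justification of the two variational inequalities: one must verify that the competitor geodesic genuinely remains in $\cK_2(\Om)$ and that the one-sided derivative computation is legitimate. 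The device of dominating $W_2^2(\rho_t,\mu)$ by the explicit quadratic $h(t)$ is exactly what sidesteps the delicate differentiability of $t\mapsto W_2^2(\rho_t,\mu)$; the remaining manipulations are routine algebra.
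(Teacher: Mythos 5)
Your proposal is correct and follows essentially the same route as the paper's proof: the variational inequalities $\int (a-T(a))\cdot(b-a)\,\textnormal{d}\eta \ge 0$ (and its symmetric counterpart) are obtained exactly as in the paper, by perturbing along the geodesic $\rho_t$ between $\rho$ and $\sigma$, coupling $\rho_t$ to $\mu$ via the suboptimal plan built from $T$, and taking the derivative at $t=0$ of the resulting quadratic. The only cosmetic differences are that you phrase everything through the optimal map $S$ inducing $\eta$ rather than through the plan itself, and that you prove the variational inequalities before expanding $|T(a)-U(b)|^2$ instead of after.
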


\begin{proof}
	One can write
	\begin{align*}
		\int_{\Omega\times\Omega} |x-y|^2 \d\gamma
		&= \int_{\Omega\times\Omega} |T(x)-U(y)|^2 \d\eta
		= \int_{\Omega\times\Omega} |x-y + T(x)-x + y-U(y)|^2 \d\eta
		\\&= \int_{\Omega\times\Omega} |x-y|^2 \d\eta + 2\int_{\Omega\times\Omega} (x-y) \cdot (T(x)-x+y-U(y)) \d\eta\\
		& + \int_{\Omega\times\Omega} |T(x)-x+y-U(y)|^2 \d\eta
		\\&\geq \int_{\Omega\times\Omega} |x-y|^2 \d\eta + 2\int_{\Omega\times\Omega} (x-y) \cdot (T(x)-x) \d\eta + 2\int_{\Omega\times\Omega} (y-x) \cdot (U(y)-y) \d\eta.
	\end{align*}
	Thus it suffices to show that
	\begin{align*}
		\int_{\Omega\times\Omega} (x-y) \cdot (T(x)-x) \d\eta \geq 0
		\qquad \text{and (by symmetry)} \qquad
		\int_{\Omega\times\Omega} (y-x) \cdot (U(y)-y) \d\eta \geq 0.
	\end{align*}
	For $t \in (0,1)$, let $\pi_t(x,y) := (1-t)x+ty$. Then $\tilde\mu_t:=(\pi_t)_{\sharp}\eta \in \cK_2(\Omega)$ by the geodesic convexity of $\mathcal{K}_2(\Omega)$. The optimality of $\tilde\mu$ in the definition of $\textnormal{P}_{\Omega}^2[\mu]$, together with the fact that $\tilde\eta:=(T,\pi_t)_\sharp\eta\in \Pi(\mu,\tilde\mu_t)$, implies
	\begin{align*}
		W_2^2(\mu,\tilde\mu)
		&\le W_2^2(\mu,\tilde\mu_t)\leq \int_{\Omega\times\Omega}|x-y|^2\d\tilde\eta= \int_{\Omega\times\Omega} |T(x)-\pi_t(x,y)|^2 \d\eta
		= \int_{\Omega\times\Omega} |T(x)-x + t(x-y)|^2 \d\eta\\
		&= \int_{\Omega\times\Omega} |T(x)-x|^2 \d\eta + 2t \int_{\Omega\times\Omega} (x-y) \cdot (T(x)-x) \d\eta + t^2 \int_{\Omega\times\Omega} |x-y|^2 \d\eta\\
		&=W_2^2(\mu,\tilde\mu)+ 2t \int_{\Omega\times\Omega} (x-y) \cdot (T(x)-x) \d\eta+ t^2W_2^2(\tilde\mu,\tilde\nu).
	\end{align*}
	Thus, we have obtained 
	\begin{align*}
		-tW_2^2(\tilde\mu,\tilde\nu)\le 2 \int_{\Omega\times\Omega} (x-y) \cdot (T(x)-x) \d\eta.
	\end{align*}
	
	Letting $t \rightarrow 0$, we conclude that
	\begin{align*}
		\int_{\Omega\times\Omega} (x-y) \cdot (T(x)-x) \d\eta \geq 0,
	\end{align*}
	as desired.
\end{proof}

When $\Omega \subseteq \mathbf{R}$ is an interval, this theorem is enough to deduce that the answer to Question \ref{question:nonexpansive?} is \emph{yes}.

\begin{cor}\label{cor:1d}
	Suppose that $\Omega \subseteq \mathbf{R}$ is an interval. Then ${\rm{P}}_{\Omega}^2$ is nonexapansive.
\end{cor}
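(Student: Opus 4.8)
The plan is to feed Theorem \ref{thm:almost_lipschitz} the extra structure available in one dimension. The theorem already gives
$$
W_2^2(\rho,\sigma)\le\int_{\Om\times\Om}|x-y|^2\d\gamma,\qquad \gamma=(T,U)_\sharp\eta\in\Pi(\mu,\nu),
$$
so it suffices to show that in dimension one this a priori \emph{suboptimal} competitor $\gamma$ is in fact \emph{optimal} for the transport between $\mu$ and $\nu$. Once that is established, $\int_{\Om\times\Om}|x-y|^2\d\gamma=W_2^2(\mu,\nu)$, and the displayed inequality becomes exactly the nonexpansiveness $W_2(\rho,\sigma)\le W_2(\mu,\nu)$.

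First I would invoke the one–dimensional structure theory of quadratic optimal transport. Since the cost $|x-y|^2$ is strictly convex and $\rho,\sigma$ are absolutely continuous (they lie in $\cK_2(\Om)$), the optimal maps $T$ (from $\rho$ to $\mu$) and $U$ (from $\sigma$ to $\nu$) are the monotone rearrangements, hence admit nondecreasing representatives on $\Om\subseteq\R$; likewise the optimal plan $\eta\in\Pi_o(\rho,\sigma)$ is the monotone coupling, so $\spt(\eta)$ is a monotone subset of $\R\times\R$ (concretely $\eta=(\id,S)_\sharp\rho$ for a nondecreasing $S$). The key point is then to check that $\gamma=(T,U)_\sharp\eta$ has monotone support, which in one dimension is equivalent to optimality (monotone support $\Rightarrow$ cyclically monotone support $\Rightarrow$ optimal, as in \cite[Theorem 5.10]{Vil}).

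To verify this, take two points $(a_1,b_1),(a_2,b_2)\in\spt(\gamma)$, written as $a_i=T(x_i)$ and $b_i=U(y_i)$ with $(x_i,y_i)\in\spt(\eta)$. Monotonicity of $\eta$ forces $(x_1-x_2)(y_1-y_2)\ge 0$. Since $T$ is nondecreasing, $a_1-a_2=T(x_1)-T(x_2)$ is $\le 0$ when $x_1\le x_2$ and $\ge 0$ when $x_1\ge x_2$, so $a_1-a_2$ never has the opposite sign to $x_1-x_2$; the same holds for $b_1-b_2$ and $y_1-y_2$ via $U$. Combining these with the monotonicity of $\eta$ gives $(a_1-a_2)(b_1-b_2)\ge 0$, so $\spt(\gamma)$ is monotone and $\gamma$ is optimal for $W_2(\mu,\nu)$. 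Chaining this with Theorem \ref{thm:almost_lipschitz} yields
$$
W_2^2(\rho,\sigma)\le\int_{\Om\times\Om}|x-y|^2\d\gamma=W_2^2(\mu,\nu),
$$
which is nonexpansiveness.

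The only delicate point — and the main (mild) obstacle — is the passage from ``optimal map/plan'' to genuinely nondecreasing representatives: one must fix versions of $T$, $U$, and $S$ that are monotone off a $\mu$- (resp.\ $\nu$-, $\rho$-) null set and confirm that discarding these null sets does not affect the support computation for $\gamma=(T,U)_\sharp\eta$. This is routine once the monotone representatives are in place, after which the sign analysis above and the conclusion follow immediately.
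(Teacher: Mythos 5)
Your proof is correct and takes exactly the paper's approach: the paper's own proof of Corollary \ref{cor:1d} consists of the single observation that the plan $\gamma$ from Theorem \ref{thm:almost_lipschitz} is monotonically increasing, hence optimal, so its cost equals $W_2^2(\mu,\nu)$. Your write-up simply supplies the routine verification (monotone representatives of $T$, $U$, $\eta$, and the sign check showing $\spt(\gamma)$ is monotone) that the paper leaves implicit.
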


\begin{proof}
	The plan $\gamma$ defined in Theorem \ref{thm:almost_lipschitz} is monotonically increasing, hence optimal.
\end{proof}

Theorem \ref{thm:almost_lipschitz} also implies nonexpansiveness when $\Pi(\mu,\nu)$ is a singleton. This is the case if and only if one of the measures $\mu,\nu$ is a Dirac mass.

\begin{cor}\label{cor:dirac_nonexpansive}
	Suppose that $\mu,\nu$ are such that $\Pi(\mu,\nu)$ is a singleton. Then
	\begin{align*}
		W_2(\tilde\mu,\tilde\nu) \leq W_2(\mu,\nu).
	\end{align*}
\end{cor}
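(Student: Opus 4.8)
The plan is to derive Corollary \ref{cor:dirac_nonexpansive} directly from Theorem \ref{thm:almost_lipschitz} by observing that when $\Pi(\mu,\nu)$ is a singleton, the suboptimal plan $\gamma := (T,U)_\sharp\eta$ produced in the theorem is forced to coincide with the \emph{unique} element of $\Pi(\mu,\nu)$, which is therefore automatically optimal.

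First I would recall the setup from Theorem \ref{thm:almost_lipschitz}: with $p=2$, $T,U$ the optimal maps from $\rho,\sigma$ to $\mu,\nu$, and $\eta$ the optimal plan from $\rho$ to $\sigma$, the plan $\gamma = (T,U)_\sharp\eta$ belongs to $\Pi(\mu,\nu)$. Indeed, $(\pi^x)_\sharp\gamma = T_\sharp\rho = \mu$ and $(\pi^y)_\sharp\gamma = U_\sharp\sigma = \nu$, so $\gamma$ is an admissible transport plan between $\mu$ and $\nu$.

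The key observation is then immediate: by hypothesis $\Pi(\mu,\nu) = \{\gamma_0\}$ for a single plan $\gamma_0$, which must therefore be the optimal plan realizing $W_2(\mu,\nu)$. Since $\gamma \in \Pi(\mu,\nu)$, we are forced to have $\gamma = \gamma_0$, and hence
\begin{align*}
\int_{\Omega\times\Omega} |x-y|^2 \d\gamma(x,y) = W_2^2(\mu,\nu).
\end{align*}
Combining this with the inequality from Theorem \ref{thm:almost_lipschitz} gives $W_2^2(\rho,\sigma) \le W_2^2(\mu,\nu)$, and taking square roots yields the claim. I would close by noting the motivating instance: if, say, $\mu = \delta_{x_0}$ is a Dirac mass, then every $\gamma \in \Pi(\mu,\nu)$ must equal $\delta_{x_0}\otimes\nu$, so $\Pi(\mu,\nu)$ is indeed a singleton and the corollary applies.

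I do not anticipate a genuine obstacle here, since the argument is a one-line consequence of the theorem; the only point requiring a moment of care is verifying that the constructed $\gamma$ really lies in $\Pi(\mu,\nu)$ (already established within the theorem's statement) so that the singleton hypothesis can be invoked to identify it with the optimal plan.
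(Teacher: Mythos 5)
Your proposal is correct and follows exactly the paper's own argument: since $\Pi(\mu,\nu)$ is a singleton, the plan $\gamma=(T,U)_\sharp\eta\in\Pi(\mu,\nu)$ from Theorem \ref{thm:almost_lipschitz} must be the unique (hence optimal) plan, so the theorem's bound immediately gives $W_2^2(\rho,\sigma)\le W_2^2(\mu,\nu)$. The additional verifications you include (that $\gamma$ is indeed admissible, and that a Dirac mass yields a singleton $\Pi(\mu,\nu)$) are consistent with the paper and require no changes.
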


\begin{proof}
	There is only one transport plan between $\mu$ and $\nu$, so using the notation of Theorem \ref{thm:almost_lipschitz}, $\gamma\in\Pi(\mu,\nu)$ must be this plan. The result follows.
\end{proof}

\begin{rem}
	In general, $\gamma\in\Pi(\mu,\nu)$ in the statement of Theorem \ref{thm:almost_lipschitz} does not need to be optimal. In the case of $\Omega = \mathbf{R}^2$, consider for instance $\mu = \frac{1}{2}\delta_{(R,0)} + \frac{1}{2}\delta_{(-R,0)}$ and $\nu = \frac{1}{2}\delta_{(t,1)} + \frac{1}{2}\delta_{(-t,-1)}$, where $R$ is large and $t$ is small. For $t > 0$, the optimal map from $\mu$ to $\nu$ sends all the mass from $(R,0)$ to $(t,1)$, and all the mass from $(-R,0)$ to $(-t,-1)$. On the other hand, for $t < 0$, the optimal map sends all the mass from $(R,0)$ to $(-t,-1)$, and all the mass from $(-R,0)$ to $(t,1)$. This means that the optimal plan from $\mu$ to $\nu$ does not vary continuously with $t$ (it is discontinuous at $t = 0$). However, one can see that the plan $\gamma$ does depend continuously on $t$, so it cannot be optimal.
\end{rem}

\subsection{Failure of nonexpansiveness of ${\rm P}_{\R^d}^p$ for $d>1$ and $p$ small}

For $p$ very close to $1$, the proof of the following proposition illustrates a counterexample to the nonexpansive property of ${\rm{P}}_{\mathbf{R}^d}^p$ onto $\cK_p(\mathbf{R}^d)$. Although our argument requires $p$ to be near 1, it remains an open question whether a restriction on $p$ is necessary.

\begin{prop}\label{prop:small_p_counterexample}
	Let $\Omega = \mathbf{R}^d$ with $d > 1$. Then there exists $p(d) > 1$ such that  ${\rm{P}}_{\Omega}^p$ fails to be nonexpansive with respect to the $p$-Wasserstein distance for $1 < p < p(d)$. In fact, one can take
	\begin{align*}
		p(d) = 1 + \frac{1}{O(d^2 \log d)}.
	\end{align*}
\end{prop}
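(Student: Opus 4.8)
The plan is to construct explicit measures $\mu,\nu\in\cP_p(\R^d)$, supported on configurations of point masses at a dimension-dependent scale, for which $W_p({\rm P}_{\R^d}^p[\mu],{\rm P}_{\R^d}^p[\nu])>W_p(\mu,\nu)$ once $p-1$ is small enough, and to locate the threshold $p(d)$. Writing $\rho={\rm P}_{\R^d}^p[\mu]$ and $\sigma={\rm P}_{\R^d}^p[\nu]$, the strategy splits into an \emph{upper bound} on $W_p(\mu,\nu)$ and a \emph{lower bound} on $W_p(\rho,\sigma)$; the construction is arranged so that the upper bound is exactly computable (one of the measures, or a simple explicit transport plan, makes $W_p(\mu,\nu)$ essentially a moment), while all the difficulty is pushed into the lower bound. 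The sign constraint that pins the interesting range of $p$ to a neighborhood of $1$ is dictated by the borderline results already available: at $p=2$ the configuration must \emph{not} be a counterexample, which is forced by Corollary~\ref{cor:dirac_nonexpansive} (equivalently, by the barycenter preservation of Proposition~\ref{prop:barycenters}), so any mechanism I exploit has to switch off at $p=2$ and be strongest as $p\downarrow 1$.

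For the geometry I would use the structure lemma proved at the start of Section~\ref{sec:prelim}: since the relevant measures are (essentially) singular, each projection is the indicator $\mathbbm{1}_{B}$ of a Borel set $B$ of volume $1$, i.e. a union of ball-like regions determined by a semidiscrete obstacle/free-boundary problem. The crucial dimensional input is that a volume-$1$ ball in $\R^d$ has radius $\omega_d^{-1/d}\sim\sqrt{d/2\pi e}$ by Stirling; thus as soon as atoms of $\mu$ or $\nu$ sit within distance $\sim\sqrt d$ of one another their projected balls must interact, and it is precisely this forced interaction at scale $\sqrt d$ that I would tune. The first concrete step is therefore to solve the projection problem for the chosen configuration with enough precision to describe the shape and position of $\rho$ and $\sigma$, and in particular to control the way the density constraint redistributes mass as a function of $p$.

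The second step is the two inequalities. For the upper bound I would exhibit an explicit admissible plan for $(\mu,\nu)$ and read off $W_p(\mu,\nu)$ up to the relevant order in $p-1$, using the elementary expansion $\ell^{\,p}=\ell\,e^{(p-1)\log\ell}=\ell\big(1+(p-1)\log\ell+\cdots\big)$; since the lengths $\ell$ involved are of order $\sqrt d$, this is where the factor $\log d$ is generated. For the lower bound I would not rely on optimality of any explicit plan but instead use a Kantorovich (dual) potential, or at worst the crude bound $W_p(\rho,\sigma)\ge W_1(\rho,\sigma)\ge\abs{\int x\d\rho-\int x\d\sigma}$, to certify that $\rho$ and $\sigma$ are genuinely far apart. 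Comparing the two expansions and solving for when the lower bound overtakes the upper bound yields an inequality of the form $(p-1)\cdot(\text{something of size }d^2\log d)<1$, which is exactly the asserted $p(d)=1+1/O(d^2\log d)$.

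The main obstacle is the lower bound together with the projection geometry it rests on: one must show that, for $p$ close to $1$, the projection operator moves the chosen configuration \emph{farther} than the configuration itself is moved, in direct opposition to the $p=2$ behavior guaranteed by Corollary~\ref{cor:dirac_nonexpansive}. Concretely this means quantifying the failure, near $p=1$, of the good continuity/contraction properties that $W_2^2(\mu,\cdot)$ enjoys (the displacement convexity along generalized geodesics recalled after the first lemma), and doing so uniformly enough at the scale $\sqrt d$ to extract the sharp order $d^2\log d$ rather than merely some threshold $p(d)>1$. A secondary technical point is to verify that the free boundary of the projected set is regular enough for the dual lower bound to be evaluated, and to confirm --- consistently with Corollary~\ref{cor:dirac_nonexpansive} --- that the constructed margin indeed closes as $p\uparrow 2$, so that no spurious counterexample is produced at $p=2$.
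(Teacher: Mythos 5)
Your outline reproduces the scaffolding of the paper's argument---the same type of configuration (atoms at the dimensional scale $R=\Rad_d(1/2)\sim\sqrt d$, so that $\mu=\tfrac12\delta_0+\tfrac12\delta_{(2R,0,\dots,0)}$ and $\nu=\delta_0$ have explicit projections $\rho=\mathbbm{1}_{B(0,R)\cup B((2R,0,\dots,0),R)}$ and $\sigma=\mathbbm{1}_{B(0,2^{1/d}R)}$), the exactly computable cost $W_p^p(\mu,\nu)=2^{p-1}R^p$, the switch-off at $p=2$ via Corollary~\ref{cor:dirac_nonexpansive}, and the perturbation from $p=1$ in which $2^{p-1}R^p-R\lesssim(p-1)\sqrt d\log d$ produces the $\log d$. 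But the heart of the proof is a \emph{strict, quantitative} lower bound on $W_1(\rho,\sigma)$, and this is exactly the step your proposal leaves unproved; worse, the fallback you offer provably cannot work. In this configuration the barycenter bound gives $W_1(\rho,\sigma)\ge\left|\int x\d\rho-\int x\d\sigma\right|=R$, which is \emph{equality} with $W_1(\mu,\nu)=R$: no gap. And for $p>1$ one has $W_p(\mu,\nu)=2^{(p-1)/p}R>R$, so the crude bound $W_p(\rho,\sigma)\ge W_1(\rho,\sigma)\ge R$ points in the wrong direction for every $p\ge1$ and can never certify failure of nonexpansiveness. The Kantorovich-dual alternative you mention could in principle substitute, but only if one actually constructs a $1$-Lipschitz potential beating $f(x)=x_1$ by a definite margin on $\rho-\sigma$, which is precisely the missing content.

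What fills the gap in the paper is a transverse-displacement estimate: writing $y'=(y_2,\dots,y_d)$, the annular slab $E=\{y:\,1.1^{1/d}R\le|y'|\le1.9^{1/d}R,\ |y_1|\le\sqrt{2^{2/d}-1.9^{2/d}}\,R\}\subseteq\spt\sigma$ satisfies $\sigma(E)\sim1$ (by Stirling), while every $x\in\spt\rho$ has $|x'|\le R$; hence any plan $\eta\in\Pi(\rho,\sigma)$ must, on $\R^d\times E$, move mass a transverse distance $|x'-y'|\ge(1.1^{1/d}-1)R\gtrsim R/d$, giving $|x-y|-|x_1-y_1|\gtrsim R/d^2$ there and therefore $\int|x-y|\d\eta\ \ge\ \int(x_1-y_1)\d\eta+c\,\sigma(E)\,R/d^2\ =\ R+c'\,d^{-3/2}$. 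It is this $d^{-3/2}$ gap, balanced against the two continuity errors $(p-1)\sqrt d\log d$ (on the $\gamma$ side) and $p-1$ (on the $\eta$ side, via $t-t^p\le p-1$), that yields $(p-1)(1+d^{p/2}\log d)<c'd^{-3/2}$ and hence the exponent in $p(d)=1+1/O(d^2\log d)$. Your outline asserts the final inequality ``$(p-1)\cdot O(d^2\log d)<1$'' without any source for the factor $d^2=d^{3/2}\cdot d^{1/2}$, and your stated obstacles (free-boundary regularity, quantifying the failure of displacement convexity near $p=1$) are beside the point here, since the projections are explicit and the needed estimate is elementary geometry of the two supports.
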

In the proof below we use the following conventions: for positive quantities $A,B$ possibly depending on various parameters, we write $A \lesssim B$ if $A \leq CB$ with $C$ an absolute constant, $A \gtrsim B$ if $B \lesssim A$, and $A \sim B$ if $A \lesssim B \lesssim A$.

\begin{proof}[Proof of Proposition \ref{prop:small_p_counterexample}]
	Let $R>0$ be the radius of the ball of volume $\frac{1}{2}$. Let $\mu = \frac{1}{2}\delta_{(0,\dots,0)} + \frac{1}{2}\delta_{(2R,0,\dots,0)}$ and $\nu = \delta_{(0,\dots,0)}$. Then $\tilde\mu = {\rm{P}}_{\Omega}^p[\mu]$ and $\tilde\nu = {\rm{P}}_{\Omega}^p[\nu]$ are the restriction of Lebesgue measure to
	\begin{align*}
		\{x \in \mathbf{R}^d : |x| \leq R \text{ or } |x-(2R,0,\dots,0)| \leq R\}
		\qquad \text{and} \qquad
		\{x \in \mathbf{R}^d : |x| \leq 2^{1/d}R\}
	\end{align*}
	respectively. Let $\gamma := \mu\otimes\nu$, which is the only element in $\Pi(\mu,\nu)$, and let $\eta\in\Pi(\tilde\mu,\tilde\nu)$ be an arbitrary transport plan.
	We will show that
	\begin{align}\label{eqn:W1_ctrexample}
		\int_{\mathbf{R}^d\times\mathbf{R}^d} |x-y| \d\eta > \int_{\mathbf{R}^d\times\mathbf{R}^d} |x-y| \d\gamma
	\end{align}
	with an explicit lower bound on the difference; then we obtain the desired inequality
	\begin{align*}
		\int_{\mathbf{R}^d\times\mathbf{R}^d} |x-y|^p \d\eta > \int_{\mathbf{R}^d\times\mathbf{R}^d} |x-y|^p \d\gamma
	\end{align*}
	for $p \in [1,p(d))$ by continuity in $p$.
	
	The right hand side of \eqref{eqn:W1_ctrexample} is necessarily equal to
	\begin{align}\label{eqn:W1_mu_nu}
		\int_{\mathbf{R}^d\times\mathbf{R}^d}|x-y|\d\gamma = R = \int_{\R^d} x_1 \d\tilde\mu - \int_{\R^d} y_1 \d\tilde\nu = \int_{\mathbf{R}^d\times\mathbf{R}^d} (x_1-y_1) \d\eta.
	\end{align}
	Thus to get a quantitative form of \eqref{eqn:W1_ctrexample}, it is enough to estimate
	\begin{align}\label{eqn:to_lbd}
		\int_{\mathbf{R}^d\times\mathbf{R}^d} (|x-y|-|x_1-y_1|) \d\eta
	\end{align}
	from below. Given $x \in \mathbf{R}^d$, denote $x' = (x_2,\dots,x_d) \in \mathbf{R}^{d-1}$. Let
	\begin{align*}
		E = \{y \in \mathbf{R}^d \colon 1.1^{1/d}R \leq |y'| \leq 1.9^{1/d}R \text{ and } |y_1| \leq \sqrt{2^{2/d}-1.9^{2/d}}R\} \subseteq \{x \in \mathbf{R}^d \colon |x| \leq 2^{1/d}R\} = \spt\tilde\nu.
	\end{align*}
	Suppose $(x,y) \in \spt\eta$ with $y \in E$. Then $|x'| \leq R$, so
	\begin{align*}
		|x'-y'| \geq (1.1^{1/d}-1)R
		= R \int_{0}^{1/d} 1.1^t \log 1.1 \d t
		\gtrsim R/d,
	\end{align*}
	On the other hand
	\begin{align*}
		|x-y| \leq \diam\{\spt\tilde\mu \cup \spt\tilde\nu\} \lesssim R.
	\end{align*}
	Combining these two facts yields
	\begin{align*}
		|x-y| - |x_1-y_1| \gtrsim \Big(\sqrt{1+\frac{1}{d^2}} - 1\Big)R \gtrsim \frac{R}{d^2}.
	\end{align*}
	Plugging this into \eqref{eqn:to_lbd} and recalling \eqref{eqn:W1_mu_nu}, we deduce that
	\begin{align}\label{eqn:abstract_lbd}
		\int_{\R^d\times\R^d} |x-y| \d\eta - \int_{\R^d\times\R^d} |x-y| \d\gamma &\geq \int_{\R^d\times\R^d} (|x-y| - |x_1-y_1|) \d\eta\\
		&\nonumber \geq \int_{\R^d \times E} (|x-y|-|x_1-y_1|) \d\eta
		\gtrsim \tilde\nu(E) \frac{R}{d^2}.
	\end{align}
	In computing $\tilde\nu(E)$, it will be convenient to write $\Vol_n(r)$ for the volume of the $n$-dimensional ball of radius $r$, and $\Rad_n(v)$ for the radius of the $n$-dimensional ball of volume $v$. Then $R = \Rad_d(1/2)$ by definition, and by classical formulas, for any $n\in\mathbb{N}$
	\begin{align*}
		\Vol_n(r) = \frac{\pi^{n/2}}{\Gamma(n/2+1)} r^n
		\qquad \text{ and } \qquad
		\Rad_n(v) = \frac{\Gamma(n/2+1)^{1/n}}{\sqrt{\pi}} v^{1/n} \sim \sqrt{n} v^{1/n}.
	\end{align*}
	Thus $R \sim \sqrt{d}$, and
	\begin{align*}
		\tilde\nu(E) &= 2\sqrt{2^{2/d} - 1.9^{2/d}} R [\Vol_{d-1}(1.9^{1/d}R) - \Vol_{d-1}(1.1^{1/d}R)]
		\\&\sim \frac{R}{\sqrt{d}} \Vol_{d-1}(R)
		= \frac{1}{\sqrt{d}} \frac{R\Vol_{d-1}(R)}{\Vol_d(R)} \Vol_d(R)
		\sim \frac{1}{\sqrt{d}} \frac{\Gamma(d/2+1)}{\Gamma((d-1)/2+1)}
		\sim 1,
	\end{align*}
	where the final estimate follows from Stirling's asymptotic for the Gamma function and from the fact that by the choice of $R$, $\Vol_d(R)=\frac12$.
	From \eqref{eqn:abstract_lbd} we therefore conclude
	\begin{align}\label{eqn:W1_case}
		\int_{\R^d\times\R^d} |x-y| \d\eta - \int_{\R^d\times\R^d} |x-y| \d\gamma \gtrsim \frac{1}{d^{3/2}}.
	\end{align}
	
	The inequality we need to prove is
	\begin{align}\label{eqn:goal}
		\int_{\R^d\times\R^d} |x-y|^p \d\gamma < \int_{\R^d\times\R^d} |x-y|^p \d\eta
	\end{align}
	for $p \in (1,p(d))$. If $c > 0$ is the implied constant in \eqref{eqn:W1_case}, then \eqref{eqn:goal} will follow from \eqref{eqn:W1_case} as long as $p$ is small enough that
	\begin{align*}
		\bracket{\int_{\R^d\times\R^d}|x-y|^p\d\gamma - \int_{\R^d\times\R^d}|x-y|\d\gamma} + \bracket{\int_{\R^d\times\R^d}|x-y|\d\eta - \int_{\R^d\times\R^d}|x-y|^p\d\eta} < \frac{c}{d^{3/2}}.
	\end{align*}
	The first term in brackets is simply
	\begin{align*}
		\int_{\R^d\times\R^d} |x-y|^p \d\gamma - \int_{\R^d\times\R^d} |x-y| \d\gamma  = 2^{p-1}R^p - R.
	\end{align*}
	Because of the general inequality $t-t^p \leq p-1$ for all $t \geq 0$ and $p\ge 1$, the second term in brackets must be at most $p-1$. Thus \eqref{eqn:goal} holds whenever
	\begin{align*}
		2^{p-1}R^p - R + p - 1 < \frac{c}{d^{3/2}}.
	\end{align*}
	One can estimate
	\begin{align*}
		2^{p-1}R^p - R \lesssim (p-1) R^p \log R
		\lesssim (p-1) d^{p/2} \log d
	\end{align*}
	for $p$ bounded, so it is enough if
	\begin{align*}
		(p-1)(1+d^{p/2}\log d) < \frac{c'}{d^{3/2}}
	\end{align*}
	for some smaller absolute constant $c' > 0$. This is true for
	\begin{align*}
		p < 1 + \frac{1}{O(d^2 \log d)},
	\end{align*}
	as long as the implied constant is sufficiently large.
\end{proof}
\begin{rem}\label{rem:1_ctrexmp_failure}
	Let us note as a consequence of Corollary \ref{cor:dirac_nonexpansive} that this construction is not a counterexample when $p =2$.
\end{rem}

\subsection{Failure of nonexpansiveness of ${\rm P}_{\Omega}^p$ for $d>1$ and $p$ large}

\begin{prop}\label{prop:p_large}
	Let $d > 1$. Then there is a closed convex set $\Omega \subseteq \mathbf{R}^d$ and a universal constant $C \geq 1$ such that ${\rm P}_{\Omega}^p$ fails to be nonexpansive with respect to the $p$-Wasserstein distance for all $p \geq Cd$.
\end{prop}

\begin{proof}
	Take $\Omega$ to be the cone
	\begin{align*}
		\Omega := \{x = (x_1,x') \in \mathbf{R} \times \mathbf{R}^{d-1} = \mathbf{R}^d \colon |x'| \leq \varepsilon x_1\},
	\end{align*}
	where $\varepsilon \in (0,1]$ is a small constant to be chosen later. Let $R = \Rad_d(1)$ denote the radius of the $d$-dimensional ball of unit volume. Set $S = 3\varepsilon^{-1}R$, and let $\nu = \delta_{(S,0,\dots,0)}$ be a Dirac mass at the point $S$ along the $x_1$ axis. Then the ball of unit volume around $(S,0,\dots,0)$ is contained in $\Omega$, so ${\rm P}_{\Omega}^p[\nu]$ is the Lebesgue measure restricted to this ball.
	
	For $t \geq 0$, let
	\begin{align*}
		\Omega_t := \{x \in \Omega : x_1 \leq t\}.
	\end{align*}
	Denote the volume of the unit $(d-1)$-dimensional ball by $V = \Vol_{d-1}(1)$. Write $\mathcal{L}^d$ for the $d$-dimensional Lebesgue measure. Then
	\begin{align*}
		\mathcal{L}^d(\Omega_t)
		= \int_0^t \mathcal{L}^{d-1}(\{x' \in \mathbf{R}^{d-1} : |x'| \leq \varepsilon x_1\}) \d x_1
		= V \varepsilon^{d-1} \int_0^t x_1^{d-1} \d x_1
		= \frac{V}{d} \varepsilon^{d-1} t^d.
	\end{align*}
	Set
	\begin{align*}
		s = \frac{1}{2}\Big(\frac{d}{V}\Big)^{\frac{1}{d}} \varepsilon^{-(1-\frac{1}{d})},
	\end{align*}
	so that $\mathcal{L}^d(\Omega_{2s}) = 1$. Let $\mu = \delta_{(s,0,\dots,0)}$. By saturation \cite[Proposition 5.2]{DePMesSanVel}, ${\rm P}_{\Omega}^p [\mu]$ is Lebesgue measure on $\spt \left({\rm P}_{\Omega}^p[\mu]\right)$, which is the intersection of $\Omega$ and a ball centered at $(s,0,\dots,0)$. This ball must have radius at least $s$, as otherwise it would be strictly contained in $\Omega_{2s}$, which has volume $1$. In particular, $\spt\left( {\rm P}_{\Omega}^p[\mu]\right) \supset \Omega_{s/2}$. The distance between $\Omega_{s/2}$ and $\spt \left({\rm P}_{\Omega}^p[\nu]\right)$ is $S-R-s/2$. From this we deduce the lower bound
	\begin{align*}
		W_p^p({\rm{P}}_{\Omega}^p[\mu], {\rm{P}}_{\Omega}^p [\nu])
		\geq \mathcal{L}^d(\Omega_{s/2}) \Big(S-R-\frac{s}{2}\Big)^p
		= \frac{1}{4^d} \Big(S-R-\frac{s}{2}\Big)^p.
	\end{align*}
	On the other hand,
	\begin{align*}
		W_p^p(\mu,\nu) = (S-s)^p.
	\end{align*}
	Thus it suffices to show that we can choose $\varepsilon$ such that
	\begin{align*}
		\frac{1}{4^d}\Big(S-R-\frac{s}{2}\Big)^p
		> (S-s)^p
	\end{align*}
	whenever $p \geq Cd$ for some absolute constant $C$. If we pick $\varepsilon$ such that $s > 2R$, then this inequality is equivalent to
	\begin{align}\label{eqn:p_large}
		p > \Big[\log_4 \frac{S-R-s/2}{S-s}\Big]^{-1} d
		= \Big[\log_4 \Big(1 + \frac{s/2-R}{S-s}\Big)\Big]^{-1} d
	\end{align}
	Let us underline that the condition $s > 2R$ is necessary to ensure that this logarithm is positive. Furthermore, by construction we always have that $S>s$, for $\varepsilon$ small.
	By the standard formula for the volume of a ball and Stirling's approximation,
	\begin{align*}
		R \sim \sqrt{d}
		\qquad \text{and} \qquad
		V^{\frac{1}{d-1}} \sim \frac{1}{\sqrt{d-1}}.
	\end{align*}
	The latter is equivalent to $V^{\frac1d}\sim \frac{1}{\sqrt{d}}$. Therefore
	\begin{align*}
		s \sim \sqrt{d} \varepsilon^{-(1-\frac{1}{d})}
		\qquad \text{and} \qquad
		S \sim \sqrt{d} \varepsilon^{-1}.
	\end{align*}
	These asymptotics show that if $\varepsilon$ is a sufficiently small absolute constant, then the constraint $s > 2R$ will be met, and
	\begin{align*}
		\frac{s/2-R}{S-s} \sim \varepsilon^{\frac{1}{d}}.
	\end{align*}
	Thus the inequality \eqref{eqn:p_large} holds as long as $p \geq Cd$, where the constant $C$ can be taken to satisfy $C\lesssim \varepsilon^{-1/d} \lesssim 1$. Let us underline that $C$ can be chosen to be a universal constant, i.e., independent of $\Omega,d,\mu,\nu$. We saw that the above choices of $\Omega,\mu,\nu$ provide a counterexample to nonexpansivenss of ${\rm P}_{\Omega}^p$ whenever \eqref{eqn:p_large} holds, so the proof is complete.
\end{proof}

\begin{rem}\label{rem:infty_ctrexmp_failure}
	We note, as in Remark \ref{rem:1_ctrexmp_failure}, that by Corollary \ref{cor:dirac_nonexpansive} this cannot be a counterexample when $p=2$.
\end{rem}

\section{Nonnegative curvature properties of Wasserstein spaces}\label{sec:5}

In this section we show that Wasserstein spaces behave as if they are nowhere non-positively curved. There are several standard notions of non-positive curvature for metric spaces. In particular, below we recall the definition of a Busemann non-positively curved (NPC) space, and we state a criterion for non-positive curvature in terms of projections. It is well-known that a Riemannian manifold has non-positive sectional curvature if and only if it is a Busemann NPC space, and if and only if it satisfies the projection criterion (see \cite{BriHae} and \cite[Theorem 2.3]{KriRep}). We show that Wasserstein spaces dramatically fail both of these criteria. As a corollary, Wasserstein spaces have nonnegative upper curvature in the sense of Definition \ref{defn:upper_curvature}.

\begin{defn}[Busemann NPC space, \cite{Jos}]\label{def:busemann}
	A Busemann NPC space is a metric space which admits an open cover by geodesic subspaces, such that whenever $[0,1]\ni t\mapsto\gamma_t$ and $[0,1]\ni t\mapsto\eta_t$ are geodesics in one of these subspaces and $\gamma_0 = \eta_0$, the function $t\mapsto\dist(\gamma_t,\eta_t)$ is convex.
\end{defn}

\begin{thm}{\cite[Corollary 2.3.1]{Jos}}\label{thm:alex-buse}
	Every Alexandrov NPC space is a Busemann NPC space.
\end{thm}

\begin{defn}[Upper Alexandrov curvature] \label{defn:upper_curvature}
	Let $X$ be a locally geodesic metric space. We define the upper (Alexandrov) curvature at a point $x \in X$ to be
	\begin{align*}
		\kappa(x) := \inf_{(U(x),\kappa'(x))} \kappa'(x),
	\end{align*}
	where the infimum is over all pairs $(U(x),\kappa'(x))$ with $U(x)$ a geodesic neighborhood of $x$ of Alexandrov curvature $\leq \kappa'(x)$.
	If $X$ is a Riemannian manifold, this is the maximum of the sectional curvatures at $x$. We say that $X$ has nonnegative upper curvature if $\kappa(x) \geq 0$ for all $x \in X$.
\end{defn}

\begin{defn}
	Given a subset $S$ of a metric space and a point $x$ in the space, write
	\begin{align*}
		{\rm{P}}_S(x) := \{s \in S : \dist(x,s) = \dist(x,S)\}.
	\end{align*}
	This is the (set-valued) projection of $x$ onto $S$.
\end{defn}

\begin{defn}[Proximinal subset]
	Suppose $X$ is a metric space and $S \subseteq U \subseteq X$. Then $S$ is \emph{$U$-proximinal} (respectively, \emph{$U$-Chebyshev}) if ${\rm{P}}_S(x)$ is nonempty (respectively, a singleton) for all $x \in U$. We drop the prefix $U$- when $U = X$.
\end{defn}

\begin{defn}[Projection criterion]
	A metric space $X$ satisfies the projection criterion for non-positive curvature if it admits an open cover $X = \bigcup_\alpha U_{\alpha}$ by geodesic subspaces, such that whenever $S \subseteq U_{\alpha}$ is geodesically convex and $U_{\alpha}$-proximinal, one has
	\begin{align*}
		\dist({\rm{P}}_S(x),{\rm{P}}_S(y)) \leq \dist(x,y)
		\qquad\text{for all } x,y \in U_{\alpha}.
	\end{align*}
\end{defn}

\begin{rem}
	The content of the previous definition is \cite[Definition 2.2]{KriRep}, and we view this as a ``criterion for non-positive curvature" because of \cite[Theorem~2.3]{KriRep}.
\end{rem}

Our main results from this section are summarized in the following theorems.

\begin{thm}\label{thm:Wp_not_busemann}
	For $d \geq 2$ and $1 \leq p \leq \infty$, no open subspace of $W_p(\mathbf{R}^d)$ is a Busemann NPC space.
\end{thm}

\begin{thm}\label{thm:Wp_not_nonexpansive}
	For $d \geq 2$ and $1 < p < \infty$, no open subspace of $W_p(\mathbf{R}^d)$ satisfies the projection criterion.
\end{thm}

As a consequence of Theorem \ref{thm:Wp_not_busemann} and Theorem \ref{thm:alex-buse}, we have the following characterization.

\begin{cor}
	For $d \geq 2$ and $1 \leq p \leq \infty$, the Wasserstein space $W_p(\mathbf{R}^d)$ has positive upper curvature.
\end{cor}

To prove these theorems, we produce ``global counterexamples" to non-positive curvature in the lemmas below, and then we scale down these counterexamples to embed them into any open subspace of $W_p(\mathbf{R}^d)$. Similar arguments work with $W_p(\mathbf{R}^d)$ replaced by $W_p(\Omega)$ or $\mathcal{K}_p(\Omega)$.

\begin{lem}\label{lem:non_busemann}
	Let $d \geq 2$ and $1 \leq p \leq \infty$. Let $\varepsilon > 0$. Then there are geodesics $(\mu_t)_{t\in[0,1]},(\nu_t)_{t\in[0,1]}$ in $W_p(\mathbf{R}^d)$ with $\mu_0 = \nu_0$ such that $W_p(\mu_1,\nu_1) \lesssim \varepsilon$ but $W_p\left(\mu_{\frac{1}{2}},\nu_{\frac{1}{2}}\right) \gtrsim 1$.
\end{lem}

\begin{proof}
	We illustrate this in $\mathbf{R}^2$; the same construction works in $\mathbf{R}^d$ by embedding $\mathbf{R}^2$ in $\mathbf{R}^d$ in the usual way. Consider the geodesics
	\begin{align*}
		\mu_t = \frac{1}{2} \delta_{(-t,1+\varepsilon-t)} + \frac{1}{2} \delta_{(t,-1-\varepsilon+t)}
		\qquad &\text{and} \qquad
		\nu_t = \frac{1}{2} \delta_{(t,1+\varepsilon-t)} + \frac{1}{2} \delta_{(-t,-1-\varepsilon+t)}, \ t\in[0,1].
	\end{align*}
	These clearly satisfy the claims of the lemma.
\end{proof}

\begin{lem}\label{lem:projection_fail}
	Let $d \geq 2$ and $1 < p < \infty$. Let $\varepsilon > 0$. Then there are measures $\mu,\nu \in W_p(\mathbf{R}^d)$ and a geodesically convex and Chebyshev subset $\mathcal {S} \subseteq W_p(\mathbf{R}^d)$ such that $W_p(\mu,\nu) \lesssim \varepsilon$ but $W_p({\rm{P}}^{p}_\mathcal{S}[\mu], {\rm{P}}^{p}_\mathcal{S}[\nu]) \gtrsim 1$.
\end{lem}

\begin{proof}
	Again we illustrate this in $\mathbf{R}^2$; an almost identical construction works in $\mathbf{R}^d$. Let $Q^{\pm}$ be the rectangles
	\begin{align*}
		Q^+ = [-1,1] \times [2,3]
		\qquad \text{and} \qquad
		Q^- = [-1,1] \times [-3,-2],
	\end{align*}
	and set $Q = Q^+ \cup Q^-$. Fix $1 < p < \infty$. Take $\mathcal{S} \subseteq W_p(\mathbf{R}^2)$ to be the set of measures $\mu$ supported in $Q$ with density bounded above by $\varepsilon^{-10}$ and with $\mu(Q^+) = \mu(Q^-) = \frac{1}{2}$.

	\begin{claim} \label{claim:proximal}
		The subset $\mathcal{S} \subseteq W_p(\mathbf{R}^2)$ is Chebyshev.
	\end{claim}
	
	{\it Proof of Claim \ref{claim:proximal}.}
	
	The proof is along the lines of the saturation argument which proves \cite[Proposition~5.2]{DePMesSanVel} (take $f=\mathbbm{1}_Q$ in the statement of that proposition). The only difference between our setting and the setting of \cite{DePMesSanVel} is that we impose the additional constraint on measures in $S$ that they should give equal mass to $Q^+$ and $Q^-$. This forces us to work a little harder than in \cite{DePMesSanVel} (without this additional constraint, the construction of the set $K$ in the next three paragraphs could be significantly simplified), but the underlying idea is the same.
	
	Let $\mu \in W_p(\mathbf{R}^2)$. We must show that ${\rm{P}}_\mathcal{S}^{p}[\mu]$ is a singleton. This set is certainly nonempty by weak-$\star$ compactness and lower semi-continuity of $W_p(\cdot,\mu)$ with respect to the weak-$\star$ topology. Therefore it suffices to show that if $\rho_1,\rho_2 \in {\rm{P}}_\mathcal{S}^{p}[\mu]$, then $\rho_1 = \rho_2$. Suppose for a contradiction that $\mathcal{L}^2(\{\rho_1 \neq \rho_2\})>0$, where $\mathcal{L}^2$ denotes the Lebesgue measure on $\mathbf{R}^2$. Since $\rho_i$, $i = 1,2$, is absolutely continuous, and the cost function is strictly convex, there is a unique optimal transport plan $\eta_i$ from $\rho_i$ to $\mu$, and $\eta_i$ is induced by a map $T_i$, i.e. $\eta_i=(\id,T_i)_\sharp \rho_i$. This $T_i$ is defined and unique $\rho_i$-a.e. Equivalently, $T_i$ is defined and unique $\mathcal{L}^2$-a.e. on $\{\rho_i \neq 0\}$.
	
	Let $\rho = \frac{1}{2}\rho_1 + \frac{1}{2}\rho_2 \in \mathcal{S}$. Consider the plan $\eta = \frac{1}{2}\eta_1 + \frac{1}{2}\eta_2$ from $\rho$ to $\mu$. By construction, the cost of this plan is exactly $W_p(\rho_1,\mu) = W_p(\rho_2,\mu)$, so by optimality of $\rho_1,\rho_2$ for the projection ${\rm{P}}_\mathcal{S}^{p}$, we must have $\rho \in {\rm{P}}_\mathcal{S}^{p}[\mu]$, and $\eta$ must be an optimal plan from $\rho$ to $\mu$. It follows from the optimality of $\eta$ and the absolute continuity of $\rho$ that $\eta$ is induced by a map $T$, so $\eta$ is supported on the graph of $T$. But $\eta$ is nonzero on the union of the graphs of $T_1,T_2$, so the only way $\eta$ can be supported on a graph is if $T_1,T_2,T$ agree on $\{\rho_1 \neq 0\} \cap \{\rho_2 \neq 0\}$.
	
	Now consider $T_i|_{\{\rho_1 \neq \rho_2\}}$. It cannot happen that this is the identity map for both $i=1,2$, because then the left hand side of the following equation is independent of $i$, but the right hand side is not:
	\begin{align*}
		\mu
		= (T_i)_{\sharp} \rho_i
		&= (T_i)_{\sharp}(\rho_i \mres\{\rho_1 = \rho_2\} + \rho_i \mres \{\rho_1 \neq \rho_2\})
		\\&= T_{\sharp}(\rho_i \mres \{\rho_1=\rho_2\}) + \rho_i \mres \{\rho_1 \neq \rho_2\}
	\end{align*}
	(in the last equality here, the first terms agree because $T_1 = T_2 = T$ on $\{\rho_1 \neq 0\} \cap \{\rho_2 \neq 0\}$, and the second terms agree because $T_1=T_2=\id$ on $\{\rho_1 \neq \rho_2\}$).
	
	Thus $T|_{\{\rho_1 \neq \rho_2\}} \neq \id$. By the definition of $\mathcal{S}$, we have $\rho_1,\rho_2 \leq \varepsilon^{-10}$, so the strict inequality $\rho < \varepsilon^{-10}$ must hold on $\{\rho_1 \neq \rho_2\}$. In particular, we conclude that there is a subset $K \subseteq Q$ of positive measure on which $0 < \rho(x) < \varepsilon^{-10}$ and $T(x) \neq x$ for all $x \in K$ (fix versions of $T,\rho,K$ so that we can speak of ``all" points instead of ``almost all" points). Note here we have crucially used our original assumption that $\rho_1 \neq \rho_2$. By Lusin's theorem, we may assume after passing to a further subset that $K$ is compact and $T,\rho$ are continuous when restricted to $K$.

	Let $x \in K$ be a Lebesgue point of $K$. Then $T(x) \neq x$, and $K$ contains most of the volume of a small ball around $x$, so there is a Lebesgue point $y$ of $K$ in such a ball so that $|y-T(x)|<|x-T(x)|$. Let $B_x,B_y$ be even smaller disjoint closed balls around $x,y$, such that
	\begin{equation}\label{eq:better}
		|\hat y - \hat z|< |\hat x - \hat z|, \ \ \forall \hat x\in B_x,\ \forall \hat y\in B_y,\ \forall\hat z\in T(K \cap B_x)
	\end{equation}
	(note $T(K \cap B_x)$ is contained in a small neighborhood of $T(x)$ because $T|_K$ is continuous).
	Without loss of generality, we reduce the radii of $B_x$ or $B_y$ further, if necessary, to have $\cL^2(K \cap B_x) = \cL^2(K \cap B_y).$
	Let $\rho'$ be a probability measure obtained from $\rho$ by subtracting a small amount of mass from $K \cap B_x$ and adding back that same amount of mass to $K \cap B_y$. That is, 
	$$\rho' = \rho-\alpha_0 {{\mathbbm{1}}}_{(K \cap B_x)}+ \alpha_0 {{\mathbbm{1}}}_{(K \cap B_y)}$$
	for some $\alpha_0>0$ very small.
	If we take $\alpha_0$ to be sufficiently small, then since $\rho < \varepsilon^{-10}$ on $K$, we will have $\rho' \leq \varepsilon^{-10}$. In addition, since $x,y$ are close, $K \cap B_x$ and $K \cap B_y$ will be in the same rectangle $Q^+$ or $Q^-$, so the total mass of each rectangle will not change, i.e., $\rho'(Q^+) = \rho'(Q^-) = \frac{1}{2}$. Therefore $\rho' \in \mathcal{S}$.
	
	To get a contradiction as desired, it remains to show that $\rho'$ is a better candidate for the projection of $\mu$ to $\mathcal{S}$ than $\rho$. So we want to show $W_p(\rho',\mu) < W_p(\rho,\mu)$. Let $\eta'$ be a transport plan from $\rho'$ to $\mu$ obtained from the optimal plan $\eta$ from $\rho$ to $\mu$ as follows. Take $\eta'$ to agree with $\eta$ except on the mass which was transferred from $K \cap B_x$ to $K \cap B_y$, and send this mass to $T(K \cap B_x)$ in any way which makes $\eta'$ a valid plan from $\rho'$ to $\mu$. More precisely, we can define $\eta'$ as 
	$$
	\eta':=\eta - \alpha_0{{\mathbbm{1}}}_{(K\cap B_x)}\otimes(\pi^y)_\sharp\eta\mres[(K\cap B_x)\times T(K\cap B_x)] +\alpha_0{{\mathbbm{1}}}_{(K\cap B_y)}\otimes(\pi^y)_\sharp\eta\mres [(K\cap B_y)\times T(K\cap B_x)].
	$$
	One readily checks that $\eta'\in\Pi(\rho',\mu)$. Now, we compute
	\begin{align}
		W_p^p(\rho',\mu)&\le\int_{\R^2\times\R^2}|x_1-x_2|^p\d\eta'(x_1,x_2) \notag \\ 
		&= W_p^p(\rho,\mu) - \alpha_0\int_{(K\cap B_x)\times T(K\cap B_x)}|x_1-x_2|^p\d x_1\d\mu(x_2) \notag \\
		& + \alpha_0\int_{(K\cap B_y)\times T(K\cap B_x)}|x_1-x_2|^p\d x_1\d\mu(x_2) \notag \\
		&<W_p^p(\rho,\mu), \label{eqn:better_bd}
	\end{align}
	where the last inequality comes from \eqref{eq:better}. Indeed, for any fixed $\hat x\in B_x\cap K$ and $\hat y\in B_y\cap K$, \eqref{eq:better} gives
	\begin{align} \label{eqn:avg_closer}
		\int_{T(B_x\cap K)}|\hat y - x_2|^p\d\mu(x_2)< \int_{T(B_x\cap K)}|\hat x - x_2|^p\d\mu(x_2).
	\end{align}
	Now \eqref{eqn:better_bd} follows by integrating the left hand side of \eqref{eqn:avg_closer} on $B_y\cap K$ with respect to $\hat y$, integrating the right side on $B_x\cap K$ with respect to $\hat x$, and using Fubini's theorem. This contradicts the optimality of $\rho$.

	\begin{claim} \label{claim:convex}
		The subset $\mathcal{S} \subseteq W_p(\mathbf{R}^2)$ is geodesically convex.
	\end{claim}

	{\it Proof of Claim \ref{claim:convex}.}
	
	Since $\dist(Q^+,Q^-) > \diam(Q^{\pm})$, the optimal transport plan $\eta$ between two measures $\rho,\sigma \in \mathcal{S}$ will never move mass from $Q^+$ to $Q^-$ or vice versa. More rigorously, suppose for a contradiction that $\eta(Q^+ \times Q^-) > 0$. From the definition of $S$,
	\begin{align*}
		\frac{1}{2} &= \rho(Q^+) = \eta(Q^+ \times Q^+) + \eta(Q^+ \times Q^-), \\
		\frac{1}{2} &= \sigma(Q^+) = \eta(Q^+ \times Q^+) + \eta(Q^- \times Q^+),
	\end{align*}
	so
	\begin{align*}
		c := \eta(Q^- \times Q^+) = \eta(Q^+ \times Q^-) > 0.
	\end{align*}
	Indeed, to see that we have equality in the previous line, we note that if $c = \eta(Q^- \times Q^+)$, then one must have that $ \eta(Q^- \times Q^-)=\frac12 - c$. Therefore, $\eta(Q^+ \times Q^+) = \frac12 - c$, and so $\eta(Q^+ \times Q^-) = \frac12 - \left(\frac12-c\right) = c$.
	
	Let $\tilde{\eta}$ be a new transport plan from $\rho$ to $\sigma$ defined by
	\begin{align*}
		\int_{Q \times Q} f \d\tilde{\eta}
		= &\int_{Q^+ \times Q^+} f \d\eta
		+ \int_{Q^- \times Q^-} f \d\eta
		\\&+ \frac{1}{c}\int_{Q^+ \times Q^-} \int_{Q^- \times Q^+} f(x,w) \d\eta(z,w) \d\eta(x,y)
		\\&+ \frac{1}{c}\int_{Q^- \times Q^+} \int_{Q^+ \times Q^-} f(x,w) \d\eta(z,w) \d\eta(x,y)
	\end{align*}
	for all continuous test functions $f \in C(Q \times Q)$. The first two terms here say that $\tilde{\eta}$ agrees with $\eta$ on all the mass which $\eta$ keeps within the same rectangle. The two double integrals say that $\tilde{\eta}$ takes the mass which $\eta$ moves from $Q^{\pm}$ to $Q^{\mp}$ and instead moves it to the mass in $Q^{\pm}$ which, according to $\eta$, comes from $Q^{\mp}$. Thus $\tilde{\eta}$ takes $Q^+$ into $Q^+$ and $Q^-$ into $Q^-$. It's easy to check that the marginals of $\tilde{\eta}$ are $\rho,\sigma$ (this is why the normalization $\frac{1}{c}$ is there), so $\tilde{\eta} \in \Pi(\rho,\sigma)$ is a valid transport plan. Furthermore, $\tilde{\eta}$ is better than $\eta$ because
	\begin{align*}
		\frac{1}{c}\int_{Q^{\pm} \times Q^{\mp}} \int_{Q^{\mp} \times Q^{\pm}} |x-w|^p \d\eta(z,w) \d\eta(x,y)
		&\leq \frac{1}{c}\int_{Q^{\pm} \times Q^{\mp}} \int_{Q^{\mp} \times Q^{\pm}} \diam(Q^{\pm})^p \d\eta(z,w) \d\eta(x,y)
		\\&= c \diam(Q^{\pm})^p
		\\&< c \dist(Q^+,Q^-)^p
		\\&\leq \int_{Q^{\pm} \times Q^{\mp}} |x-w|^p \d\eta.
	\end{align*}
	This contradicts the optimality of $\eta$, proving that indeed $\eta$ cannot move mass from $Q^+$ to $Q^-$ or vice versa. The geodesic connecting $\rho,\sigma$ is $[0,1]\ni t\mapsto \rho_t :=((1-t)x + ty)_{\#}\d\eta(x,y)$, which remains in $\mathcal{S}$, so $\mathcal{S}$ is geodesically convex.

	We have now established the desired properties of $\mathcal{S}$. It remains to construct suitable $\mu,\nu$. For $\varepsilon>0$ small, let
	\begin{align*}
		\mu = \frac{1}{2} \delta_{(-1,\varepsilon)} + \frac{1}{2} \delta_{(1,-\varepsilon)}
		\qquad \text{and} \qquad
		\nu = \frac{1}{2} \delta_{(1,\varepsilon)} + \frac{1}{2} \delta_{(-1,-\varepsilon)}
	\end{align*}
	(these measures are familiar from Lemma \ref{lem:non_busemann} because they are the same as $\mu_1,\nu_1$ in the proof of that lemma). Clearly $W_p(\mu,\nu) \lesssim \varepsilon$. By the saturation argument of \cite[Proposition 5.2]{DePMesSanVel},
	\begin{align*}
		{\rm{P}}_{\mathcal{S}}^{p}[\mu] = \varepsilon^{-10}{\mathbbm{1}}_{(B^+ \cup B^-) \cap Q} \d x,
	\end{align*}
	where $B^{\pm}$ is the ball around $(\mp 1, \pm \varepsilon)$ such that  $B^{\pm} \cap Q$ has area $\frac{1}{2}\varepsilon^{10}$ (note that $B^+ \cap Q$ is disjoint from $B^- \cap Q$). Then $B^{\pm}$ has radius $\leq 2$, so ${\rm{P}}_{\mathcal{S}}^{p}[\mu]$ is supported in $Q_{\mu} = Q_{\mu}^+ \cup Q_{\mu}^-$, where
	\begin{align*}
		Q_{\mu}^+ = [-1,-0.9] \times {[2,2.1]}
		\qquad \text{and} \qquad
		Q_{\mu}^- = [0.9,1] \times {[-2.1,-2]}
	\end{align*}
	(assuming of course that $\varepsilon$ is small enough). Similarly, ${\rm{P}}_{\mathcal{S}}^{p}[\nu]$ is supported in $Q_{\nu} = Q_{\nu}^+ \cup Q_{\nu}^-$, where
	\begin{align*}
		Q_{\nu}^+ = [0.9,1] \times {[2,2.1]}
		\qquad \text{and} \qquad
		Q_{\nu}^- = [-1,-0.9] \times {[-2.1,-2]}.
	\end{align*}
	Thus
	\begin{align*}
		W_p({\rm{P}}^{p}_{\mathcal{S}}[\mu],{\rm{P}}^{p}_{\mathcal{S}}[\nu])
		&\geq \dist(Q_{\mu},Q_{\nu})
		\geq 1.
		\qedhere
	\end{align*}
\end{proof}

We now use the constructions in Lemmas \ref{lem:non_busemann} and \ref{lem:projection_fail} to prove Theorems \ref{thm:Wp_not_busemann} and \ref{thm:Wp_not_nonexpansive}, respectively.

\begin{proof}[Proof of Theorem \ref{thm:Wp_not_busemann}]
	Once again we set $d=2$ for ease of notation, but the same construction works in higher dimensions. It suffices to check that given any open subspace $\mathcal{U} \subseteq W_p(\mathbf{R}^2)$, there are geodesics $(\mu_t)_{t\in[0,1]},(\nu_t)_{t\in[0,1]}$ in $\mathcal{U}$ with $\mu_0 = \nu_0$ and $[0,1]\ni t \mapsto W_p(\mu_t,\nu_t)$ non-convex. Fix $\rho \in \mathcal{U}$. Since $\rho$ has finite $p$th moment, it can be approximated in $W_p$ by a measure with compact support. Thus we may assume from the start that $\rho$ has compact support. Let $h \in \mathbf{R}^2$ far outside the support of $\rho$, and let $(\mu_t')_{t\in[0,1]},(\nu_t')_{t\in[0,1]}$ be the $h$-translates of the geodesics in the proof of Lemma \ref{lem:non_busemann}, so
	\begin{align*}
		\mu_t' = \frac{1}{2} \delta_{h+(-t,1+\varepsilon-t)} + \frac{1}{2} \delta_{h+(t,-1-\varepsilon+t)}
		\qquad &\text{and} \qquad
		\nu_t' = \frac{1}{2} \delta_{h+(t,1+\varepsilon-t)} + \frac{1}{2} \delta_{h+(-t,h-1-\varepsilon+t)}
	\end{align*}
	(fix $\varepsilon$ small here). Then take
	\begin{align*}
		\mu_t = (1-\varepsilon)\rho + \varepsilon\mu_t'
		\qquad \text{and} \qquad
		\nu_t = (1-\varepsilon)\rho + \varepsilon\nu_t'.
	\end{align*}
	These are geodesics for $h$ large enough (so that $\spt(\rho)$ is far away from $\spt(\mu_t')$ and $\spt(\nu_t')$ for all $t\in[0,1]$), and then they lie in $\mathcal{U}$ for $\varepsilon$ small enough depending on $h$. They satisfy $\mu_0 = \nu_0$ and $W_p^p(\mu_1,\nu_1) \lesssim \varepsilon^{1+p}$, but $W_p^p(\mu_{\frac{1}{2}},\nu_{\frac{1}{2}}) \gtrsim \varepsilon$, so $t \mapsto W_p(\mu_t,\nu_t)$ is non-convex.
\end{proof}

\begin{proof}[Proof of Theorem \ref{thm:Wp_not_nonexpansive}]
	As above, we work in $\mathbf{R}^2$ for simplicity, but everything carries over to $\mathbf{R}^d$. It suffices to check that given any open subspace $\mathcal{U} \subseteq W_p(\mathbf{R}^2)$, there is a geodesically convex and $W_p(\mathbf{R}^2)$-Chebyshev subspace $\mathfrak{S} \subseteq \mathcal{U}$ such that the projection of $\mathcal{U}$ onto $\mathfrak{S}$ is not nonexpansive. As in the proof of Theorem \ref{thm:Wp_not_busemann}, let $\rho$ be a compactly supported measure in $\mathcal{U}$. Fix $h \in \mathbf{R}^2$ far outside the support of $\rho$, and then fix $\varepsilon$ small depending $h$.
	Let $\mathcal{S} \subseteq W_p(\mathbf{R}^2)$ be as in Lemma \ref{lem:projection_fail}, and let $\mathcal{S}'$ be the $h$-translate of $\mathcal{S}$, so $\mathcal{S}'$ is the set of measures $\sigma$ supported in $Q = Q^+ \cup Q^-$, where
	\begin{align*}
		Q^+ = h+([-1,1] \times [2,3])
		\qquad \text{and} \qquad
		Q^- = h+([-1,1] \times [-3,-2]),
	\end{align*}
	with density bounded above by $\varepsilon^{-10}$ and with $\sigma(Q^+) = \sigma(Q^-) = \frac{1}{2}$. Then take
	\begin{align*}
		\mathfrak{S} = \{(1-\varepsilon)\rho + \varepsilon\sigma : \sigma \in \mathcal{S}'\}.
	\end{align*}
	Since $h$ is far away from $\spt\rho$, the optimal plan between two measures in $\mathfrak{S}$ is the identity on the $(1-\varepsilon)\rho$ part. It follows from Lemma \ref{lem:projection_fail} that $\mathfrak{S}$ is geodesically convex. In addition, the proof of Claim \ref{claim:proximal} in Lemma \ref{lem:projection_fail} carries over \textit{mutatis mutandis} to show that $\mathfrak{S}$ is Chebyshev.
	For $\varepsilon$ small enough (having already fixed $h$), we have $\mathfrak{S} \subseteq \mathcal{U}$.
	
	Now let $\mu',\nu'$ be the $h$-translates of the measures in Lemma \ref{lem:projection_fail}, so
	\begin{align*}
		\mu' = \frac{1}{2} \delta_{h+(-1,\varepsilon)} + \frac{1}{2} \delta_{h+(1,-\varepsilon)}
		\qquad \text{and} \qquad
		\nu' = \frac{1}{2} \delta_{h+(1,\varepsilon)} + \frac{1}{2} \delta_{h+(-1,-\varepsilon)}.
	\end{align*}
	Set
	\begin{align*}
		\mu = (1-\varepsilon)\rho + \varepsilon\mu'
		\qquad \text{and} \qquad
		\nu = (1-\varepsilon)\rho + \varepsilon\nu'
	\end{align*}
	(again, $\mu,\nu \in \mathcal{U}$ for $\varepsilon$ sufficiently small).
	Using once more that $h$ is far from $\spt\rho$, we have
	\begin{align*}
		{\rm P}^{p}_{\mathfrak{S}}[\mu]
		= (1-\varepsilon)\rho + \varepsilon {\rm P}^{p}_{\mathcal{S}'}[\mu']
		\qquad \text{and} \qquad
		{\rm P}^{p}_{\mathfrak{S}}[\nu]
		= (1-\varepsilon)\rho + \varepsilon {\rm P}^{p}_{\mathcal{S}'}[\nu'].
	\end{align*}
	Now clearly $W_p^p(\mu,\nu) \lesssim \varepsilon^{1+p}$, but by Lemma \ref{lem:projection_fail} combined with the fact that the optimal plan between ${\rm P}^{p}_{\mathfrak{S}}[\mu]$ and ${\rm P}^{p}_{\mathfrak{S}}[\nu]$ is the identity on $(1-\varepsilon)\rho$,
	\begin{align*}
		W_p^p({\rm P}^{p}_{\mathfrak{S}}[\mu],{\rm P}^{p}_{\mathfrak S}[\nu]) \gtrsim \varepsilon.
	\end{align*}
	Thus the projection of $\mathcal{U}$ onto $\mathfrak{S}$ is not nonexpansive.
\end{proof}

\medskip

\noindent
{\sc Acknowledgements.} \, We thank Hugo Lavenant for his feedback on an earlier version of the manuscript.
The first author was partially supported by the National Science Foundation (NSF) Graduate Research Fellowship Program (GRFP) under Grant No. DGE-2039656. The second author was partially supported by the Engineering and Physical Sciences Research Council (EPSRC) under Award No. EP/X020320/1.

\appendix

\section{Some results from optimal transport}\label{sec:prelim}

Some properties of the projection operator ${\rm{P}}_\Om^2$ onto the set $\cK_2(\Om)$ were studied in \cite{DePMesSanVel}. In particular, arguments verbatim to the ones presented there (see in particular \cite[Proposition 5.2]{DePMesSanVel}) yield the following lemma.

\begin{lem} \label{lem:P_defined}
	Let $p\in(1,+\infty)$ and let $\Om\subseteq\R^d$ be closed and convex with $
		\cL^d(\Om)\ge 1$. Then for $\cK=\cK_p(\Om)$ and for any $\mu\in\cP_p(\Om)$, the problem \eqref{def:proj} has a unique solution ${\rm{P}}_\Om^p[\mu]$. Moreover, there exists $B\subseteq\Om$ (depending on $\mu,p,\Omega$) Borel measurable such that
	$${\rm{P}}_\Om^p[\mu]=\mu^{\rm{ac}}{\mathbbm{1}}_B+ \mathbbm{1}_{\Om\setminus B},$$
	where $\mu^{\rm{ac}}$ stands for the absolutely continuous part of $\mu$ with respect to $\cL^{d}$. Here we have used the notation $\mathbbm{1}_{B}$ for the indicator function of $B$.
	Furthermore, for a Borel set $X\subset\R^{d}$, $\mathbbm{1}_{X}$ is interpreted as a Borel measure by $\mathbbm{1}_{X}(A) := \cL^{d}(X\cap A)$, for any Borel set $A$.	
\end{lem}

\begin{rem}
	In the case of $p=2$, the projection ${\rm{P}}_{\Omega}^2$ behaves well with respect to interpolation along {\it generalized geodesics}. Let $\mu,\nu_0,\nu_1 \in \mathcal{P}_2(\Omega)$ with $\mu$ absolutely continuous. Then there are optimal maps $T_0,T_1$ which send $\mu$ to $\nu_0,\nu_1$ respectively. For $t \in [0,1]$, define the generalized geodesic connecting $\nu_0$ and $\nu_1$ with respect to $\mu$ by $\nu_t = (T_t)_{\sharp}\mu$, where $T_t = (1-t)T_0 + tT_1$. In \cite{MauRouSan}, using the displacement 1-convexity of $W_2^2(\mu,\cdot)$ along generalized geodesics, i.e. for all $t\in[0,1]$
	\begin{align*}
		W_2^2(\mu,\nu_t) \leq (1-t)W_2^2(\mu,\nu_0) + tW_2^2(\mu,\nu_1) - t(1-t)W_2^2(\nu_0,\nu_1),
	\end{align*}
	it was shown that ${\rm{P}}_\Om^2$ is locally $\frac12$-H\"older continuous. Since this argument is relying on the ``Hilbertian like'' behavior of $W_2$, it is unclear to us whether such reasoning could be carried through for $p\neq 2$.
\end{rem}

The following result is well-known to experts. However, for the convenience of the reader, we supply a sketch of its proof below.

\begin{lem}\label{lem:admissibles_convex}
	Let $\Om\subseteq\R^d$ be closed and convex such that $\cL^d(\Om)\ge 1$ and $p\in(1,+\infty)$. The subspace $\cK_p(\Omega)$ defined in \eqref{def:Kp} is closed and geodesically convex in $(\cP_p(\Om),W_p)$.
\end{lem}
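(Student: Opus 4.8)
The plan is to establish the two assertions—closedness and geodesic convexity—separately, the second being where the real work lies.

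For closedness I would first reformulate the density bound measure-theoretically: $\rho\in\cK_p(\Om)$ precisely when $\rho\in\cP_p(\Om)$ and $\rho(A)\le\cL^d(A)$ for every Borel $A\subseteq\Om$, equivalently for every open $A$ by outer regularity. Now if $\rho_n\in\cK_p(\Om)$ and $W_p(\rho_n,\rho)\to 0$, then $\rho\in\cP_p(\Om)$ and $\rho_n\to\rho$ narrowly. For every open $U\subseteq\Om$ the portmanteau theorem gives $\rho(U)\le\liminf_n\rho_n(U)\le\cL^d(U)$, and passing from open sets to all Borel sets by regularity yields $\rho\ll\cL^d$ with density $\le 1$. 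Hence $\rho\in\cK_p(\Om)$, so $\cK_p(\Om)$ is closed. This step is routine.

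For geodesic convexity, fix $\rho_0,\rho_1\in\cK_p(\Om)$. Since both are absolutely continuous and $p>1$, there is a unique optimal transport map $T$ pushing $\rho_0$ to $\rho_1$, and the unique constant-speed $W_p$ geodesic is the displacement interpolation $\rho_t=(T_t)_\sharp\rho_0$ with $T_t:=(1-t)\id+tT$. As $\Om$ is convex and $x,T(x)\in\Om$, we have $T_t(x)\in\Om$, so $\rho_t$ is supported in $\Om$; it remains to show $\rho_t\le 1$ a.e. Writing $J_t(x):=\det DT_t(x)$ for the (approximate) Jacobian, the change-of-variables identity $\rho_t(T_t(x))\,J_t(x)=\rho_0(x)$ reduces the claim to the pointwise bound $J_t(x)\ge\rho_0(x)$ for a.e.\ $x$. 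At the endpoints this holds: $J_0\equiv 1\ge\rho_0$ since $\rho_0\le 1$, while the endpoint identity gives $J_1(x)=\rho_0(x)/\rho_1(T(x))\ge\rho_0(x)$ since $\rho_1\le 1$. Because $DT_t=(1-t)I+t\,DT$, the intermediate bound follows once $t\mapsto J_t(x)^{1/d}$ is concave, for then $J_t^{1/d}\ge(1-t)J_0^{1/d}+tJ_1^{1/d}\ge(1-t)\rho_0^{1/d}+t\rho_0^{1/d}=\rho_0^{1/d}$.

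The main obstacle is exactly this concavity of $t\mapsto\det((1-t)I+t\,DT(x))^{1/d}$. When $p=2$, $DT(x)=D^2\phi(x)$ is symmetric positive semidefinite (Brenier), and the concavity is precisely the Minkowski determinant inequality $\det(A+B)^{1/d}\ge\det(A)^{1/d}+\det(B)^{1/d}$ applied with $A=(1-t)I$ and $B=t\,DT$. For general $p\in(1,\infty)$ the Jacobian $DT(x)$ need not be symmetric, so the crux is the structural fact that the Jacobian of a $W_p$-optimal map is a.e.\ diagonalizable with nonnegative real eigenvalues: if $DT$ has eigenvalues $\lambda_i\ge 0$ then $J_t^{1/d}=\prod_i((1-t)+t\lambda_i)^{1/d}$ is a geometric mean of nonnegative affine functions of $t$, hence concave. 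Equivalently, one may package the statement as displacement convexity along $W_p$-geodesics of the internal energy $\mathcal{E}(\rho)=\int_\Om U(\rho)\d x$ with $U(s)=0$ for $s\le 1$ and $U(s)=+\infty$ for $s>1$; this $U$ satisfies McCann's condition ($U(0)=0$ and $s\mapsto s^dU(s^{-d})$ convex and nonincreasing), whence $\mathcal{E}(\rho_t)\le(1-t)\mathcal{E}(\rho_0)+t\mathcal{E}(\rho_1)=0$ and thus $\rho_t\in\cK_p(\Om)$. I expect verifying this displacement convexity for $p\neq 2$, rather than the classical $p=2$ case, to be the one genuinely nontrivial point.
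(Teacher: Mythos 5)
Your proof is correct and follows essentially the same route as the paper's: both reduce geodesic convexity to displacement interpolation, the change-of-variables identity $\rho_t(T_t(x))\det(DT_t(x))=\rho_0(x)$, and the concavity-type inequality $\det(DT_t)^{1/d}\ge (1-t)+t\det(DT)^{1/d}$, from which the endpoint Jacobian bounds give the density bound. The one point you flag as the genuinely nontrivial crux for $p\neq 2$ --- the Jacobian interpolation inequality, which you would obtain from a.e.\ diagonalizability of $DT$ with nonnegative eigenvalues --- is precisely what the paper outsources to Kell's Theorem 3.13, and your alternative packaging via McCann's condition is the route through Santambrogio's Theorem 7.28 that the paper explicitly mentions but bypasses in favor of the direct Jacobian argument.
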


\begin{proof}
	The closedness of $\cK_p(\Om)$ is straight forward. 
	
	Let $\mu_0,\mu_1\in\cK_p(\Om)$. To show that $\cK_p(\Om)$ is geodesically convex, we will show that the constant speed geodesic $[0,1]\ni t\mapsto\mu_t$ connecting $\mu_0$ to $\mu_1$ is absolutely continuous with a density bounded above by $1$. This is a consequence of \cite[Theorem 7.28, Proposition 7.29]{San} and \cite[Corollary 19.5]{Vil}, but for completeness we provide a sketch of this proof.
	
	First, by \cite[Lemma 3.14]{Kel}, we have that $\mu_t\ll\cL^d\mres\Om$ for all $t\in[0,1]$.
	
	To show the upper bound on $\mu_t$, we rely on the interpolation inequality for the Jacobian determinants of optimal transport maps provided in \cite[Theorem 3.13]{Kel} (see also \cite{CorMcCSch} for $p=2$). Let $\phi:\Om\to\R$ be the unique $c$-concave Kantorovich potential in the optimal transport of $\mu_0$ onto $\mu_1$, where $c(x,y) = \frac{1}{p}|x-y|^p$. Then, by \cite[Theorem 3.4]{Kel}, $T(x):=x-|\nabla\phi(x)|^{q-2}\nabla\phi(x)$ (where $1/p+1/q=1$) is the unique optimal transport map between $\mu_0$ and $\mu_1$. Moreover, $T_t(x):=x-t|\nabla\phi(x)|^{q-2}\nabla\phi(x)$ is the unique optimal transport map between $\mu_0$ and $\mu_t$.
	
	Let us denote by $\Om_{\rm{id}}\subset\Om$ the set where $\phi$ is differentiable and $\nabla\phi=0$. Then, reasoning as in \cite{Kel}, we know that there exists a set $B\subseteq \Om\setminus\Om_{\rm{id}}$ of full measure such that $\phi $ is twice differentiable on $B$ with $\det(DT(x))>0$ if $x\in B$. Then by \cite[Theorem 3.13]{Kel} we have
	\begin{equation}\label{eq:Jac}
		\det(DT_t(x))^{\frac{1}{d}}\ge (1-t) + t \det(DT(x))^{\frac1d}.
	\end{equation}
	We remark that because our underlying space $\Om$ is flat, the volume distortion coefficients present in the previous inequality (stated in \cite{Kel} for general Finslerian manifolds) become 1.
	
	By \eqref{eq:Jac}, if $\det(DT(x))\ge 1$, we conclude that  $\det(DT_t(x))\ge 1$, while if $\det(DT(x))\le 1$, then $\det(DT_t(x))\ge \det(DT(x))$. In conclusion, 
	$$
	\det(DT_t(x))\ge \min\{1,\det(DT(x))\}.
	$$
	
	Now, since $\mu_t=(T_t)_\sharp\mu_0$, when restricted to the set $B$, the change of variable formula yields
	\begin{align*}
		\mu_t(T_t(x)) = \frac{\mu_0(x)}{\det(DT_t(x))}\le \frac{\mu_0(x)}{\min\{1,\det(DT(x))\}}\le\max\{\mu_0(x),\mu_1(T(x))\}\le 1.
	\end{align*}
	When restricted to the relative complement of $B$, $T_t$ essentially is the identity map, where the upper bound is also clearly preserved. The result follows. 
\end{proof}

\end{document}